\documentclass[11pt]{amsart}
\usepackage{amsmath}
\usepackage{amssymb,amsmath,amsthm}
\oddsidemargin=-.0cm \evensidemargin=-.0cm \textwidth=16cm
\textheight=22cm \topmargin=0cm



\newtheorem{proposition}{Proposition}[section]
\newtheorem{theorem}[proposition]{Theorem}
\newtheorem{corollary}[proposition]{Corollary}
\newtheorem{lemma}[proposition]{Lemma}
\theoremstyle{definition}
\newtheorem{definition}[proposition]{Definition}
\newtheorem{remark}[proposition]{Remark}

\numberwithin{equation}{section}

\def\e{{\rm e}}
\def\eps{\varepsilon}
\def\th{\vartheta}
\def\d{{\rm d}}
\def\dist{{\rm dist}}

\def\AA {{\mathfrak A}}
\def\EE {{\mathfrak E}}
\def\BB {{\mathfrak B}}
\def\LL {{\mathfrak L}}
\def\II {{\mathfrak I}}
\def\t {{\mathbb T}}
\def\Ball{{B}}
\def\R {\mathbb{R}}
\def\N {\mathbb{N}}
\def\H {{\mathcal H}}
\def\V {{\mathcal V}}

\def\B {{\mathcal B}}
\def\C {{\mathcal C}}
\def\D {{\rm dom}}

\def\K {{\mathcal K}}

\def\M {{\mathcal M}}
\def\W {{\mathcal W}}

\def\Z {{\mathcal Z}}
\def\X {{\mathcal X}}
\def\Y {{\mathcal Y}}
\def \l {\langle}
\def \r {\rangle}
\def \pt {\partial_t}

\def \and{\quad\text{and}\quad}

\def \au {\rm}
\def \ti {\it}
\def \jou {\rm}
\def \bk {\rm}
\def \no#1#2#3 {{\bf #1} (#3), #2.}
\def \eds#1#2#3 {#1, #2, #3.}

\title[Asymptotics of the Coleman-Gurtin model]
{Asymptotics of the Coleman-Gurtin model}

\author[M.D. Chekroun, F. Di Plinio, N.E. Glatt-Holtz and V. Pata]
{}

\subjclass{Primary: 35B41, 35K05; Secondary: 45K05, 47H20.}
\keywords{Heat conduction with memory, history space framework,
solution semigroup, global attractor, exponential attractor.}

\thanks{Research partially supported by the United States National Science Foundation
under grant NSF-DMS-0604235.}

\email{chekro@lmd.ens.fr {\rm (M.D. Chekroun)}}
\email{fradipli@indiana.edu {\rm (F. Di Plinio)}}
\email{negh@indiana.edu {\rm (N.E. Glatt-Holtz))}}
\email{vittorino.pata@polimi.it {\rm (V. Pata)}}

\begin{document}
\maketitle


\centerline{\scshape M.D. Chekroun}
\medskip
{\footnotesize
 \centerline{\'Ecole Normale Sup\'erieure, CERES-ERTI}
   \centerline{75231 Paris Cedex 05, France}
}

\medskip

\centerline{\scshape F. Di Plinio and N.E. Glatt-Holtz}
\medskip
{\footnotesize
 \centerline{Indiana University, Mathematics Department}
   \centerline{Bloomington, IN 47405, USA}
}

\medskip

\centerline{\scshape Vittorino Pata}
\medskip
{\footnotesize
 \centerline{Politecnico di Milano, Dipartimento di Matematica ``F.\ Brioschi''}
   \centerline{20133 Milano, Italy}
}

\bigskip

\begin{abstract}
This paper is concerned with the integrodifferential equation
$$\pt u-\Delta  u -\int_0^\infty \kappa(s)\Delta u(t-s)\,\d s +
\varphi(u)=f$$ arising in the Coleman-Gurtin's theory of heat
conduction with hereditary memory, in presence of a nonlinearity
$\varphi$ of critical growth. Rephrasing the equation within the
history space framework, we prove the existence of global and
exponential attractors of optimal regularity and finite fractal
dimension for the related solution semigroup, acting both on the
basic weak-energy space and on a more regular phase space.
\end{abstract}

\section{Introduction}

\subsection{The model equation}

Let $\Omega\subset\R^3$ be a bounded domain with a sufficiently
smooth boundary $\partial\Omega$. For $t>0$, we consider the
integrodifferential equation in the variable
$u=u(\boldsymbol{x},t):\Omega\times\R\to\R$
\begin{equation}
\label{ORI} \pt u-\Delta  u -\int_0^\infty \kappa(s)\Delta
u(t-s)\,\d s + \varphi(u)=f,
\end{equation}
subject to the Dirichlet boundary condition
\begin{equation}
\label{ORI-BC}
u(\boldsymbol{x},t)_{|\boldsymbol{x}\in\partial\Omega}=0.
\end{equation}
The function $u$ is supposed to be known for all $t\leq 0$.
Accordingly, the boundary-value problem \eqref{ORI}-\eqref{ORI-BC}
is supplemented with the ``initial condition"
\begin{equation}
\label{ORI-IC} u(\boldsymbol{x},t)={\hat
u}(\boldsymbol{x},t),\quad\forall t\leq 0,
\end{equation}
where ${\hat u}:\Omega\times(-\infty,0]\to\R$ is a given function
accounting for the {\it initial past history} of $u$. In the sequel,
we agree to omit the dependence on $\boldsymbol{x}\in\Omega$.

Among many other diffusive phenomena, equation~\eqref{ORI} models
heat propagation in a homogeneous isotropic heat conductor with
hereditary memory. Here, the classical Fourier law ruling the heat
flux is replaced by the more physical constitutive relation devised
in the seminal paper of B.D.\ Coleman and M.E.\ Gurtin~\cite{COL},
based on the key assumption that the heat flux evolution is
influenced by the past history of the temperature gradient (see also
\cite{Amnesia,GG,GMP,GRA,LON,MIL,NUN}). In that case, $u$ represents
the temperature variation field relative to the equilibrium
reference value, $f$ is a time-independent external heat supply, and
the nonlinear term $\varphi(u)$ has to comply with some
dissipativity assumptions, although it can exhibit an
antidissipative behavior at low temperatures. Such a nonlinearity is
apt to describe, for instance, temperature-dependent radiative
phenomena (cf.\ \cite{LYS}).

\subsection{Basic assumptions}

We take $f\in L^2(\Omega)$ and $\varphi\in\C^2(\R)$, with
$\varphi(0)=0$, satisfying the growth and the dissipation conditions
\begin{align}
\label{critical}
|\varphi''(u)|&\leq c(1+|u|^p),\quad p\in[0,3],\\
\label{DERIVATA} \liminf_{|u|\to\infty}\varphi'(u)&> -\lambda_1,
\end{align}
where $\lambda_1>0$ is the first eigenvalue of the Laplace-Dirichlet
operator on $L^2(\Omega)$. Concerning the {\it memory kernel}, we
assume
$$\kappa(s)=\kappa_0-\int_0^s\mu(\sigma)\,\d\sigma,\quad\kappa_0>0,$$
for some (nonnegative) nonincreasing summable function $\mu$ on
$\R^+=(0,\infty)$ of total mass
$$\int_0^\infty \mu(s)\,\d s=\kappa_0.$$
Consequently, $\kappa$ is nonincreasing and nonnegative. Moreover,
we require the inequality (cf.\ \cite{GMPZ})
\begin{equation}
\label{NEC} \kappa(s)\leq\Theta\mu(s)
\end{equation}
to hold for every $s>0$ and some $\Theta>0$. Observe that
\eqref{NEC} implies the exponential decay
$$\kappa(s)\leq \kappa_0\e^{-s/\Theta}.$$
As a byproduct, $\kappa$ is summable on $\R^+$. To avoid the
presence of unnecessary constants, we agree to put
$$
\int_0^\infty \kappa(s)\,\d s=\int_0^\infty s\mu(s)\,\d s=1,
$$
where the first equality follows from an integration by parts.

\subsection{Asymptotic behavior}

The present paper is focused on the asymptotic properties of the
solutions to \eqref{ORI}-\eqref{ORI-IC}. Setting the problem in the
so-called history space framework~\cite{DAF} (see the next
Section~\ref{SSemi}), in order to have a solution semigroup, our
goal is to obtain global and exponential attractors of optimal
regularity and finite fractal dimension. We address the reader to
the books \cite{BV,CV,HAL,HAR,LAD,MZ,TEM} for a detailed discussion
on the theory of attractors.

The existence of the global attractor in the weak-energy space
$\H^0$ (where $u\in L^2(\Omega)$) has been proved in~\cite{Amnesia},
generalizing some earlier results from~\cite{GMP}. However, both its
finite fractal dimension and the existence of exponential attractors
are established only for $p<3$. It should be noted that, without
growth conditions on $\varphi$ other than~\eqref{critical} (e.g.\
the same polynomial control rate from above and below), the case
$p=3$ is critical, which explains the difficulties faced by
\cite{Amnesia}.

In this work, we are mainly interested to solutions in the higher
regularity space $\H^1$ (where $u\in H_0^1(\Omega)$). Here, the
treatment of the case $p=3$ is even more delicate, since the same
problems encountered in~\cite{Amnesia} arise from the very
beginning. Besides, our assumptions on the memory kernel $\mu$ are
more general (as shown in~\cite{CHEP}, the most general within the
class of decreasing kernels). The strategy to deal with the critical
case leans on an instantaneous regularization of $u$, obtained by
means of estimates of ``hyperbolic" flavor, demanding in turn a
skillful treatment of the memory terms. The effect of such a
regularization is to render the nonlinearity subcritical in all
respects, allowing to construct regular exponentially attracting
sets in $\H^1$. Incidentally, once the existence of global and
exponential attractors in $\H^1$ is established, it is standard
matter to recover analogous results in the less regular space
$\H^0$, extending the analysis of~\cite{Amnesia} to the critical
case $p=3$.

\subsection{Plan of the paper}

The functional setting is introduced in the next Section~2. In
Section~3, we recall some known facts on the solution semigroup. The
main result are then stated in Section~4. The rest of the paper is
devoted to the proofs: in Section~5, we study an auxiliary problem,
which will be used in the subsequent Section~6, in order to draw the
existence of a strongly continuous semigroup in a more regular
space; in Section~7, we demonstrate the existence of a regular
exponentially attracting set, while the final Section~8 contains the
conclusions of the proofs.

\section{Functional Setting and Notation}

Throughout this work, $\II(\cdot)$ will stand for a {\it generic}
increasing positive function.

Given a Hilbert space $\H$, we denote by $\langle\cdot,\cdot
\rangle_\H$ and $\|\cdot\|_\H$ its inner product and norm, and we
call $\LL(\H)$ the Banach space of bounded linear operators on $\H$.
For $R>0$, we put
$$\Ball_\H(R)=\{z\in\H : \|z\|_{\H}\leq R\}.$$
The Hausdorff semidistance between two sets $\X,\Y\subset\H$ is
defined as
$$\dist_\H(\X,\Y)=\sup_{x\in\X}\inf_{y\in\Y}\|x-y\|_\H,$$
while the fractal dimension of a (relatively) compact set
$\K\subset\H$ is
$$\dim_{\H}(\K)=\limsup_{\eps\to 0^+}
\frac{ \ln {\mathfrak N}_\eps(\K)}{\ln (1/\eps)},$$ ${\mathfrak
N}_\eps(\K)$ being the smallest number of $\eps$-balls of $\H$
necessary to cover $\K$.

We consider the strictly positive Laplace-Dirichlet operator on
$L^2(\Omega)$
$$A=-\Delta,\quad\D(A)=H^2(\Omega)\cap H^1_0(\Omega),
$$
generating, for $r\in\R$, the scale of Hilbert spaces (we omit the
index $r$ when $r=0$)
$$H^r={\D}(A^{r/2}),\quad \l u,v\r_{r}=\l
A^{r/2}u,A^{r/2}v\r_{L^2(\Omega)}.$$ In particular, $H=L^2(\Omega)$,
$H^1=H_0^1(\Omega)$, $H^2=H^2(\Omega)\cap H^1_0(\Omega)$. Whenever
$r_1>r_2$, the embedding $H^{r_1}\subset H^{r_2}$ is compact and
$$\|u\|_{r_1}\geq\lambda_1^{(r_1-r_2)/2}\|u\|_{r_2},\quad\forall u\in
H^{r_1}.$$ Next, we introduce the {\it memory spaces}
$$
\M^r=L^2_\mu(\R^+;H^{r}), \quad \l \eta,\psi\r_{\M^r}=
\int_0^\infty\mu(s)\l\eta(s),\psi(s)\r_{r}\,\d s,
$$
along with the infinitesimal generator of the right-translation
semigroup on $\M^r$, i.e.\ the linear operator
$$T\eta=-\eta',\quad \D_r(T)=\big\{\eta\in{\M^r}:\eta'\in\M^r,\,\,
\eta(0)=0\big\},$$ where the {\it prime} stands for the
distributional derivative, and $\eta(0)=\lim_{s\to 0}\eta(s)$ in
$H^r$. For every $\eta\in\D_r(T)$, we have the basic inequality (see
\cite{Terreni})
\begin{equation}
\label{TTT} \l T \eta,\eta\r_{\M^r}\leq 0.
\end{equation}
Finally, we define the phase spaces
$$
\H^r = H^r \times \M^{r+1}, \quad\V=H^2\times\M^2.$$

\subsection*{A word of warning}

Without explicit mention, we will perform  several formal estimates,
to be in a position to exploit \eqref{TTT}, for instance. As usual,
the estimates are justified within a proper Galerkin approximation
scheme.

\section{The Semigroup}
\label{SSemi}

Introducing the auxiliary variable
$\eta=\eta^t(s):[0,\infty)\times\R^+\to\R$, accounting for the {\it
integrated past history} of $u$, and formally defined as (see
\cite{DAF,Terreni})
\begin{equation}
\label{AUXV} \eta^t(s)=\int_{0}^s u(t-y)\d y,
\end{equation}
we recast \eqref{ORI}-\eqref{ORI-IC} in the {\it history space
framework}. This amounts to considering the Cauchy problem in the
unknowns $u=u(t)$ and $\eta=\eta^t$
\begin{equation}
\label{SYS}
\begin{cases}
\displaystyle
\pt u+A  u + \int_0^\infty \mu(s)A\eta(s)\,\d s + \varphi(u)=f,\\
\pt \eta=T\eta+ u, \\
\noalign{\vskip1.5mm} (u(0),\eta^0)=z,
\end{cases}
\end{equation}
where $z=(u_0,\eta_0)$ and $f\in H$ is independent of time.

\begin{remark}
The original problem~\eqref{ORI}-\eqref{ORI-IC} is recovered by
choosing
$$u_0=\hat u(0),\quad \eta_0(s)=\int_{0}^s \hat u(-y)\d y.
$$
We address the reader to \cite{CMP,Terreni} for more details on the
equivalence between the two formulations, which, within the proper
functional setting, is not merely  formal.
\end{remark}

Problem~\eqref{SYS} generates a (strongly continuous) semigroup of
solutions $S(t)$, on both the phase spaces $\H^0$ and $\H^1$ (see,
e.g.\ \cite{Amnesia}). Thus,
$$(u(t),\eta^t)=S(t)z.$$
In particular, $\eta^t$ has the explicit representation
formula~\cite{Terreni}
\begin{equation}
\label{REP} \eta^t(s)=
\begin{cases}
\int_0^s u(t-y)\d y & s\leq t,\\
\eta_0(s-t)+\int_0^t u(t-y)\d y & s>t.
\end{cases}
\end{equation}

\begin{remark}
\label{DECLIN} As shown in \cite{CMP}, the linear homogeneous
version of \eqref{SYS}, namely,
\begin{equation}
\label{LINS}
\begin{cases}
\displaystyle
\pt u+A  u + \int_0^\infty \mu(s)A\eta(s)\,\d s=0,\\
\pt \eta=T\eta+ u,
\end{cases}
\end{equation}
generates a strongly continuous semigroup $L(t)$ of (linear)
contractions on {\it every} space $\H^r$, satisfying, for some
$M\geq 1$, $\eps>0$ independent of $r$, the exponential decay
\begin{equation}
\label{DECCO2} \|L(t)\|_{\LL(\H^r)}\leq M \e^{-\eps t}.
\end{equation}
\end{remark}

Let us briefly recall some known facts from \cite{Amnesia}.

\begin{itemize}
\item For $\imath=0,1$, there exists $R_\imath>0$ such that
$$\BB_\imath:=\Ball_{\H^\imath}(R_\imath)$$
is an absorbing set for $S(t)$ in $\H^\imath$.
\smallskip
\item Bounded sets $\B\subset\H^0$
are exponentially attracted by $\BB_1$ in the norm of $\H^0$:
\begin{equation}
\label{LOWATT} \dist_{\H^0}(S(t)\B,\BB_1)\leq
\II\big(\|\B\|_{\H^0}\big)\e^{-\eps_0 t},
\end{equation}
for some $\eps_0>0$.
\smallskip
\item For every $z\in\Ball_{\H^1}(R)$,
\begin{equation}
\label{palmiro} \|S(t)z\|_{\H^1}^2 + \int_t^{t+1}\|
u(\tau)\|_2^2\,\d\tau\leq \II(R).
\end{equation}
\end{itemize}

\begin{remark}
These results have been obtained under the commonly adopted
assumption
\begin{equation}
\label{BAD} \mu'(s)+\delta\mu(s)\leq 0,\quad \text{a.e.}\,\, s>0,
\end{equation}
for some $\delta>0$. On the other hand, it is not hard to show
that~\eqref{NEC} can be equivalently written as
\begin{equation}
\label{NEC2} \mu(s+\sigma)\leq C\e^{-\delta \sigma}\mu(s),\quad
\text{a.e.}\,\, s>0,\,\,\,\forall\sigma\geq 0,
\end{equation}
for some $\delta>0$ and $C\geq 1$, which is easily seen to coincide
with \eqref{BAD} when $C=1$. However, if $C>1$, the gap between
\eqref{BAD} and \eqref{NEC2} is quite relevant (see \cite{CHEP} for
a detailed discussion). For instance, \eqref{BAD} does not allow
$\mu$ to have (even local) flat zones. Besides, any compactly
supported $\mu$ fulfills \eqref{NEC2}, but it clearly need not
satisfy \eqref{BAD}. Nonetheless, the aforementioned results remain
true within \eqref{NEC}, although the proofs require the
introduction of a suitable functional in order to reconstruct the
energy, as in the case of the following Lemma~\ref{LEMMASUPER}.
\end{remark}

\section{Main Results}
\label{SM}

Defining the vector
\begin{equation}z_f=(u_f,\eta_f)\in H^2\times \D_2(T)\subset \V,
\label{zetaf} \end{equation} with $u_f=\frac12A^{-1}f$ and
$\eta_f(s)=u_f s$, the main result of the paper reads as follows.

\begin{theorem}
\label{MAIN} There exists a compact set $\EE\subset\V$ with
$\dim_\V(\EE)<\infty$, and positively invariant under the action of
$S(t)$, satisfying the exponential attraction property
$$\dist_{\V}(S(t) \B,\EE)
\leq\II\big(\|\B\|_{\H^1}\big) \frac{\e^{-\omega_1
t}}{\sqrt{t}},\quad\forall t>0,
$$
for some $\omega_1>0$ and every bounded set $\B\subset\H^1$.
Moreover,
$$\EE = z_f+ \EE_\star,$$
where $\EE_\star$ is a bounded subset of $\H^3$, whose second
component belongs to $\D_2(T)$.
\end{theorem}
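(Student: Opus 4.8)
The plan is to construct $\EE$ as the exponential attractor of a modified, more dissipative dynamical system obtained by subtracting the stationary-type vector $z_f$, and then to transport regularity through a decomposition of the semigroup into a uniformly decaying linear part plus a compact, smoothing remainder. First I would pass to the translated variable $\bar z(t) = S(t)z - z_f$; writing $\bar u = u - u_f$ and $\bar\eta = \eta - \eta_f$, the pair $(\bar u,\bar\eta)$ solves a system of the same structure as \eqref{SYS} but with the linear forcing absorbed, so that the only inhomogeneity is the shifted nonlinearity $\varphi(\bar u + u_f) - \varphi(u_f)$ — which, thanks to $f\in L^2$ hence $u_f\in H^2$, and to \eqref{critical}, is a controlled perturbation. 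Using \eqref{palmiro} and the absorbing set $\BB_1$ I would first confirm that trajectories starting from bounded subsets of $\H^1$ enter and remain in a fixed ball of $\H^1$, so it suffices to build $\EE_\star$ as an exponential attractor for the restricted semigroup on $\BB_1$.

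The core step is the splitting $\bar z(t) = L(t)\bar z_0 + K(t)\bar z_0$, where $L(t)$ is the linear contraction semigroup of Remark~\ref{DECLIN} satisfying \eqref{DECCO2} on every $\H^r$, and $K(t)\bar z_0$ solves the system driven by $-[\varphi(\bar u + u_f)-\varphi(u_f)] + (\text{lower-order memory terms})$ with zero initial data. The exponential decay \eqref{DECCO2} handles the "tail" uniformly. For the compact part $K(t)$ one needs a uniform-in-time bound in a space strictly more regular than $\H^1$ — ideally landing in $\H^3$ with the $\eta$-component in $\D_2(T)$, which is exactly the regularity claimed for $\EE_\star$. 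This is where the "hyperbolic-flavor" estimates advertised in the introduction enter: because $p=3$ is critical for $H^1$-data, one cannot directly test the equation with $A\bar u$ and close; instead I would perform an instantaneous smoothing estimate, multiplying by $A^{1+\theta}\bar u$ (or by $t\,A^{2}\bar u$ to get the $1/\sqrt t$ weight) and carefully integrating the memory coupling by parts in $s$, exploiting \eqref{TTT} at the level of $\M^2$ and the kernel inequality \eqref{NEC}/\eqref{NEC2} to control $\int_0^\infty \mu(s) A^{2}\bar\eta(s)\,\d s$. The appearance of the factor $\e^{-\omega_1 t}/\sqrt t$ in the statement strongly suggests a "transitivity of exponential attraction" argument combined with a single instantaneous-regularization estimate that produces $H^3$-bounds on $u(t)$ for $t>0$ with a $t^{-1/2}$ blow-up as $t\to 0^+$; I would isolate this as an auxiliary lemma.

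With the decomposition in hand, the construction of the exponential attractor is then standard (cf.\ the references \cite{BV,MZ} cited in the excerpt): one verifies the \emph{smoothing property} — there exist $t_\star>0$ and a compact seminorm such that $\|S(t_\star)z_1 - S(t_\star)z_2\|_{\H^1} \le \tfrac12\|z_1-z_2\|_{\H^1} + (\text{compact seminorm of }z_1-z_2)$ on $\BB_1$, which follows from \eqref{DECCO2} for the $L$-part and the $\H^3$-bound (compactly embedded in $\H^1$ by the compact embeddings $H^{r_1}\subset H^{r_2}$ and the corresponding fact for $\M^r$) for the $K$-part — together with Lipschitz continuity of $(t,z)\mapsto S(t)z$ on $[0,T]\times\BB_1$ in the $\H^1$-metric. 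The abstract exponential-attractor theorem then yields a compact, positively invariant $\EE_\star\subset\BB_1$ with finite fractal dimension in $\H^1$ and exponential attraction of $\BB_1$ in $\H^1$; the uniform $\H^3$-bound on $K(t)$ upgrades this to the stated $\V$-attraction (interpolating, or re-running the smoothing estimate in the $\V$-metric), and pins down $\EE_\star\subset\H^3$ with $\eta$-component in $\D_2(T)$. Finally setting $\EE = z_f + \EE_\star$ transports invariance, compactness, finite dimension and the exponential attraction estimate back to the original semigroup, with the bounded set $\B\subset\H^1$ entering only through $\II(\|\B\|_{\H^1})$ via the absorbing-time estimate.

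I expect the main obstacle to be the instantaneous-regularization estimate at the critical exponent $p=3$: testing with a higher power of $A$ generates a term like $\int_\Omega \varphi'(u)\,\nabla u\cdot\nabla(A^\theta u)$ whose worst contribution, under $|\varphi''|\le c(1+|u|^3)$, is borderline non-integrable for $H^1$-data in $\R^3$, so closing it requires squeezing an extra fractional derivative out of the memory/parabolic smoothing (the "hyperbolic" trick) rather than out of the instantaneous diffusion alone, and simultaneously keeping the memory variable in $\D_2(T)$ — a bookkeeping-heavy step where the kernel hypothesis \eqref{NEC} in its general form (not the stronger \eqref{BAD}) forces the use of an auxiliary energy-reconstruction functional, as flagged in the remark preceding this section.
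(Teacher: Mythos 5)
Your overall architecture matches the paper's: split the solution as $z_f$ plus a decaying linear part $L(t)(z-z_f)$ plus a remainder with zero initial data driven by the nonlinearity, prove a uniform higher-order bound on the remainder, feed the resulting decomposition into the abstract (discrete) exponential-attractor theorem, and recover attraction of arbitrary bounded sets of $\H^1$ via the transitivity of exponential attraction together with the $t^{-1/2}$ instantaneous regularization. However, there are two genuine gaps. First, \emph{compactness of the memory component}: you assert that the uniform $\H^3$-bound on the remainder yields compactness ``by the compact embeddings $H^{r_1}\subset H^{r_2}$ and the corresponding fact for $\M^r$.'' There is no such fact: the embedding $\M^{r_1}\subset\M^{r_2}$ is \emph{never} compact (no smoothing occurs in the $s$-variable), and the paper explicitly warns that $\H^3\subset\V$ is continuous but not compact. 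A uniform $\H^3$-bound with $\eta\in\D_2(T)$ is therefore not enough; one must additionally control the tail functional $\Xi[\eta]$, i.e.\ work in the space $\W$ and $\Z=H^3\times\W$, and propagate $\Xi$ along the $\eta$-equation (Lemma~\ref{TECH1}). Without this, your $\EE_\star$ need not be compact in $\V$ and the exponential-attractor machinery does not start.

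Second, the \emph{critical regularization estimate} is left unresolved: you propose testing with $A^{1+\theta}\bar u$ or $tA^2\bar u$, correctly observe that this produces a borderline term under $|\varphi''(u)|\le c(1+|u|^3)$, and then only gesture at ``squeezing an extra fractional derivative out of the memory.'' The paper's actual device is different and avoids the borderline term entirely: one multiplies by $A\pt u$, which places $2\|\pt u\|_1^2$ on the favorable side of the energy identity, so the nonlinearity need only be estimated as $2\langle\varphi(u),\pt u\rangle_1\le\|\varphi(u)\|_1^2+\|\pt u\|_1^2$ with $\|\varphi(u)\|_1\le\|\varphi'(u)\|_{L^\infty}\|u\|_1\le\II(R)(1+\|Z\|_\V^2)$ by Agmon's inequality \eqref{AGMON} and \eqref{palmiro} --- subcritical in all respects. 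This is then coupled with the energy-reconstruction functional $\Lambda[Z,f]$ (containing the mixed term $\langle\eta,u\rangle_{\M^2}$, needed because only \eqref{NEC} rather than \eqref{BAD} is assumed) and a Gronwall lemma with translation-bounded coefficients; the $t^{-1/2}$ weight comes from the elementary substitution $\tilde\Lambda(t)=\tfrac{t}{1+t}\Lambda(t)$, not from a $t$-weighted multiplier. A minor further point: your claim that subtracting $z_f$ leaves only the shifted nonlinearity relies on $z_f$ being a stationary solution of the \emph{linear} system with forcing $f$ (which uses the normalization $\int_0^\infty s\mu(s)\,\d s=1$); the paper instead keeps $\varphi(u)$ intact in the remainder equation, which is simpler and avoids having to re-estimate $\varphi(\bar u+u_f)-\varphi(u_f)$.
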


\begin{remark}
The theorem implicitly makes a quite interesting assertion: whenever
$t>0$ and $\B\subset\H^1$ is bounded, $S(t)\B$ is a bounded subset
of $\V$ (cf.\ Proposition~\ref{propREG} below).
\end{remark}

Such a set $\EE$ is called an {\it exponential attractor}. It is
worth noting that, as $\eta_f\in\D_2(T)$, the second component of
$\EE$ belongs to $\D_2(T)$ as well. Besides, if $f\in H^1$, it is
immediate to deduce the boundedness of $\EE$ in $\H^3$.

\begin{corollary}
\label{MAINcorcor} With respect to the Hausdorff semidistance in
$\H^1$, the attraction property improves to
$$\dist_{\H^1}(S(t) \B,\EE)
\leq\II\big(\|\B\|_{\H^1}\big)\e^{-\omega_1 t}.
$$
\end{corollary}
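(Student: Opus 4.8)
The plan is to derive Corollary~\ref{MAINcorcor} as a soft consequence of Theorem~\ref{MAIN}, using two ingredients: the continuous embedding $\V=H^2\times\M^2\hookrightarrow H^1\times\M^2=\H^1$, which upgrades the $\V$-attraction to $\H^1$-attraction for free, and a crude $t$-independent $\H^1$-bound on the orbits, which kills the singular factor $1/\sqrt{t}$ near $t=0$.

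First I would record that $\|z\|_{\H^1}\leq c\,\|z\|_\V$ for all $z\in\V$, with $c=\max\{\lambda_1^{-1/2},1\}$: indeed the memory components of $\V$ and $\H^1$ are literally the same space $\M^2$, while on the $H^2$/$H^1$ component one has the Poincaré-type inequality $\|u\|_1\leq\lambda_1^{-1/2}\|u\|_2$. Hence $\dist_{\H^1}(\X,\EE)\leq c\,\dist_\V(\X,\EE)$ for every $\X\subset\V$, and Theorem~\ref{MAIN} immediately gives
$$\dist_{\H^1}(S(t)\B,\EE)\leq c\,\II\big(\|\B\|_{\H^1}\big)\,\frac{\e^{-\omega_1 t}}{\sqrt{t}},\qquad \forall t>0.$$
For $t\geq 1$ this already yields $\dist_{\H^1}(S(t)\B,\EE)\leq c\,\II\big(\|\B\|_{\H^1}\big)\,\e^{-\omega_1 t}$, since $\sqrt{t}\geq 1$ there.

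It remains to cover the range $0<t<1$, and for this a single uniform estimate suffices. By \eqref{palmiro} (applied with $R=\|\B\|_{\H^1}$, after noting $\B\subset\Ball_{\H^1}(R)$), we have $\|S(t)z\|_{\H^1}\leq\sqrt{\II(\|\B\|_{\H^1})}$ for every $z\in\B$ and every $t\geq 0$; moreover $\EE$, being compact in $\V\hookrightarrow\H^1$, is contained in some ball $\Ball_{\H^1}(\rho)$. Fixing any $e_0\in\EE$ and bounding $\inf_{e\in\EE}\|S(t)z-e\|_{\H^1}\leq\|S(t)z\|_{\H^1}+\|e_0\|_{\H^1}$, we get $\dist_{\H^1}(S(t)\B,\EE)\leq\sqrt{\II(\|\B\|_{\H^1})}+\rho=:\II_1(\|\B\|_{\H^1})$ for all $t>0$; in particular, for $0<t<1$, $\dist_{\H^1}(S(t)\B,\EE)\leq\e^{\omega_1}\II_1(\|\B\|_{\H^1})\,\e^{-\omega_1 t}$. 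Combining the two regimes and absorbing the constants $c$ and $\e^{\omega_1}$ and the maximum of the two increasing functions into a single generic increasing $\II$, we conclude $\dist_{\H^1}(S(t)\B,\EE)\leq\II\big(\|\B\|_{\H^1}\big)\,\e^{-\omega_1 t}$ for all $t>0$. There is no real obstacle here: all the genuine work — constructing $\EE$ and establishing the regularizing $\V$-rate — is already inside Theorem~\ref{MAIN}, and the only extra point, removing the $1/\sqrt t$ blow-up for small $t$, is settled by the a priori bound \eqref{palmiro}.
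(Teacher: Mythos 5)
Your argument is correct and is essentially the paper's own proof: both pass from the $\V$-semidistance to the $\H^1$-semidistance via the continuous embedding $\V\hookrightarrow\H^1$ (absorbing the $1/\sqrt{t}$ factor for $t\geq 1$), and both handle $0<t<1$ by the uniform bound \eqref{palmiro}, absorbing the constant into $\II(\cdot)\e^{-\omega_1 t}$. The only difference is cosmetic: you spell out the small-time bound (boundedness of $\EE$ in $\H^1$ plus the triangle inequality) which the paper leaves implicit.
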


As a byproduct, we establish the existence of the $(\H^1,\V)$-global
attractor.

\begin{theorem}
\label{THMATT} There exists a compact set $\AA\subset\EE$ with
$\dim_\V(\AA)<\infty$, and strictly invariant under the action of
$S(t)$, such that
$$\lim_{t\to\infty}\big[\dist_{\V}(S(t) \B,\AA)\big]=0,
$$
for every bounded set $\B\subset\H^1$.
\end{theorem}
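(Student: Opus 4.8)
The plan is to deduce Theorem~\ref{THMATT} from Theorem~\ref{MAIN} and the standard attractor theory, by taking $\AA$ to be the $\omega$-limit set of the absorbing ball $\BB_1$. First I would recall that, by the results quoted from~\cite{Amnesia}, $\BB_1=\Ball_{\H^1}(R_1)$ is an absorbing set for $S(t)$ in $\H^1$, and that by the remark following Theorem~\ref{MAIN} (made precise in Proposition~\ref{propREG}) the set $S(1)\BB_1$ is a \emph{bounded} subset of $\V$. Since the embedding $H^2\subset H^1$ is compact and, crucially, the memory component enjoys the smoothing coming from the $\M^2\hookrightarrow\M^1$ structure together with the decomposition in Remark~\ref{DECLIN} (write $S(t)z = L(t)z + \text{(compact tail)}$), the set $\K:=\overline{S(1)\BB_1}^{\,\V}$ is a compact absorbing set for $S(t)$ in $\H^1$; it is absorbing in $\V$ as well, in the sense that it absorbs every bounded subset of $\H^1$ in the topology of $\V$.

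Next I would set $\AA=\bigcap_{s\ge 0}\overline{\bigcup_{t\ge s}S(t)\K}^{\,\V}$, the $\omega$-limit set of $\K$ with respect to the $\V$-topology. The classical argument (see e.g.\ \cite{BV,HAL,TEM}) then gives: (i) $\AA$ is nonempty and compact in $\V$, because $\K$ is compact in $\V$ and $S(t)$ is a strongly continuous semigroup on $\H^1$ (continuity into $\V$ on $\K$ follows since $S(t)$ maps the invariant bounded-in-$\V$ region continuously, using \eqref{palmiro} and the explicit representation \eqref{REP}); (ii) $\AA$ is strictly (fully) invariant, $S(t)\AA=\AA$ for all $t\ge 0$, by the standard invariance of $\omega$-limit sets of absorbing sets; (iii) $\AA$ attracts every bounded $\B\subset\H^1$ in the $\V$-metric, i.e.\ $\dist_\V(S(t)\B,\AA)\to 0$, because first $S(t)\B\subset\K$ for $t$ large (absorption), and then $\dist_\V(S(t)\K,\AA)\to 0$ by definition of the $\omega$-limit set of a compact absorbing set.

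It remains to check the two assertions that are special to this statement: that $\AA\subset\EE$ and that $\dim_\V(\AA)<\infty$. For the inclusion, note that $\EE$ is closed in $\V$, positively invariant, and exponentially attracts every bounded subset of $\H^1$ in the $\V$-metric by Theorem~\ref{MAIN}; hence for each point $a\in\AA$, writing $a=\lim S(t_n)k_n$ with $k_n\in\K$ and $t_n\to\infty$, we have $\dist_\V(S(t_n)k_n,\EE)\le \dist_\V(S(t_n)\K,\EE)\to 0$, so $a\in\overline{\EE}^{\,\V}=\EE$. (Equivalently: any global attractor is contained in any closed attracting positively invariant set.) The finite fractal dimension is then immediate, since $\AA\subset\EE$ and, monotonicity of fractal dimension under inclusion in the same metric space, $\dim_\V(\AA)\le\dim_\V(\EE)<\infty$ by Theorem~\ref{MAIN}.

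The only genuinely delicate point is establishing that $S(1)\BB_1$ is bounded in $\V$ and that its $\V$-closure is compact and absorbing — but this is exactly the content already granted by Theorem~\ref{MAIN} and the accompanying Proposition~\ref{propREG}, so in the present proof it is merely invoked. Thus the proof of Theorem~\ref{THMATT} is an entirely standard corollary: set $\AA=\omega(\K)$, quote the abstract $\omega$-limit-set theorem for compactness, strict invariance and $\V$-attraction, and use the exponential attraction of $\EE$ to get $\AA\subset\EE$ and hence $\dim_\V(\AA)<\infty$.
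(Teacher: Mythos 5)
Your overall architecture (take an $\omega$-limit set, then use the attraction and closedness of $\EE$ to get $\AA\subset\EE$ and hence $\dim_\V(\AA)\le\dim_\V(\EE)<\infty$) is exactly the standard argument the paper has in mind: its own ``proof'' of Theorem~\ref{THMATT} is a one-line appeal to Theorem~\ref{MAIN} and the references \cite{BV,TEM}. Your last two paragraphs (inclusion in $\EE$ via closedness plus attraction, and monotonicity of the fractal dimension) are fine.

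There is, however, one concretely wrong step: the claim that $\K:=\overline{S(1)\BB_1}^{\,\V}$ is compact in $\V$. Proposition~\ref{propREG} only gives that $S(1)\BB_1$ is \emph{bounded} in $\V$, and a closed bounded subset of the infinite-dimensional space $\V$ is not compact. The ingredients you invoke do not repair this: compactness of $H^2\subset H^1$ and of $\M^2\hookrightarrow\M^1$ would give compactness in $\H^1$, not in $\V$, and the decomposition $S(t)z=z_f+L(t)(z-z_f)+W(t)$ of Lemma~\ref{LEMMASUPER} does not help at the fixed time $t=1$, because $L(1)(z-z_f)$ is merely bounded in $\V$ (the linear memory semigroup $L(t)$ has no smoothing; its contribution only becomes negligible as $t\to\infty$). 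Indeed the paper explicitly warns that $\H^3\subset\V$ is continuous but never compact, which is precisely why it introduces the space $\Z$ and the set $\BB=z_f+\Ball_\Z(R_\star)$. The fix is immediate and costs nothing: run your $\omega$-limit construction on $\EE$ itself (or on $\BB$), i.e.\ set $\AA=\bigcap_{t\ge 0}S(t)\EE$. Since $\EE$ is compact in $\V$, positively invariant, and attracts every bounded $\B\subset\H^1$ in the $\V$-metric by Theorem~\ref{MAIN}, this $\AA$ is nonempty, compact, strictly invariant, attracts all bounded subsets of $\H^1$, and is contained in $\EE$; your dimension argument then closes the proof. Equivalently, one may phrase the missing ingredient as asymptotic compactness of $S(t)$ on bounded sets of $\H^1$, which follows from the exponential attraction to the compact set $\EE$ — but not from finite-time boundedness in $\V$.
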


As observed in \cite{Amnesia}, the semigroup $S(t)$ fulfills the
backward uniqueness property on the attractor (in fact, on the whole
space $\H^0$), a typical feature of equations with memory. A
straightforward consequence is

\begin{corollary}
\label{BU} The restriction of $S(t)$ on $\AA$ is a group of
operators.
\end{corollary}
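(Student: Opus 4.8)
The plan is to combine the two structural facts recalled just before the statement. By Theorem~\ref{THMATT} the attractor $\AA$ is strictly invariant, so $S(t)\colon\AA\to\AA$ is onto for every $t\geq 0$; and by the backward uniqueness of $S(t)$ on $\H^0$ (hence on $\AA\subset\H^1\subset\H^0$), the equality $S(t)z_1=S(t)z_2$ with $z_1,z_2\in\AA$ forces $z_1=z_2$, so $S(t)|_\AA$ is also one-to-one. Thus each $S(t)|_\AA$ with $t\geq 0$ is a bijection of $\AA$, and one sets $S(-t):=(S(t)|_\AA)^{-1}\colon\AA\to\AA$ and $S(0)=\mathrm{Id}_\AA$; by construction $S(t)S(-t)=S(-t)S(t)=\mathrm{Id}_\AA$ on $\AA$ for every $t\geq 0$.

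Next I would verify the group law $S(t)S(s)=S(t+s)$ on $\AA$ for all $t,s\in\R$. For $t,s\geq 0$ this is the semigroup property (which in particular gives $S(a)S(b)=S(b)S(a)$ for $a,b\geq 0$). For the remaining sign patterns I would argue by reduction: to prove an identity $S(p)S(q)=S(r)$, compose both sides on the left with a conveniently chosen nonnegative-time operator $S(\rho)$ so that, after using the semigroup relations among nonnegative times together with $S(a)S(-a)=\mathrm{Id}_\AA$, both sides collapse to $\mathrm{Id}_\AA$; invertibility of $S(\rho)$ on $\AA$ then yields the identity. Running through the finitely many cases according to the signs of $t$, $s$ and $t+s$ shows that $\{S(t)\}_{t\in\R}$ is a group of bijections of $\AA$.

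To record that the group acts continuously, it remains to check that $t\mapsto S(t)z$ is continuous on all of $\R$ for each $z\in\AA$. Right-continuity at any $t_0$ is immediate from the strong continuity of $S(t)$, applied at the point $S(t_0)z\in\AA$. For left-continuity I would use the compactness of $\AA$: given $h_n\downarrow 0$, pass to a subsequence with $S(t_0-h_n)z\to w\in\AA$; then $S(t_0)z=S(h_n)S(t_0-h_n)z\to w$ by the strong continuity of $S$ together with its local Lipschitz bound on bounded sets, uniform for $t$ in a bounded time interval (standard for~\eqref{SYS}), whence $w=S(t_0)z$ and a subsequence argument closes the case.

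I do not expect a genuine obstacle here: backward uniqueness — the one delicate ingredient — is imported from \cite{Amnesia} and strict invariance is Theorem~\ref{THMATT}, so what is left is the bookkeeping of the group axioms and, if continuity is to be included, the routine uniform-in-time Lipschitz estimate for $S(t)$. The only place asking for a little care is organizing the sign-based case analysis for the group law cleanly.
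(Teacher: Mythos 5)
Your argument is correct and is exactly the route the paper intends: the paper offers no written proof, simply noting that the corollary is a ``straightforward consequence'' of the backward uniqueness property imported from \cite{Amnesia} together with the strict invariance of $\AA$ from Theorem~\ref{THMATT}, which is precisely the injectivity-plus-surjectivity argument you carry out. Your additional bookkeeping (the sign-case verification of the group law and the compactness argument for left-continuity of the backward orbits) is sound and merely makes explicit what the paper leaves to the reader.
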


The next result provides the link between the two components of the
solutions on the attractor. Recall that the attractor is made by the
sections (say, at time $t=0$) of all complete bounded trajectories
of the semigroup (see, e.g.\ \cite{HAR}).

\begin{proposition}
\label{CHAR} Any solution $(u(t),\eta^t)$ lying on $\AA$
satisfies~\eqref{AUXV} for all $t\in\R$.
\end{proposition}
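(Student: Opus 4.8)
The plan is to characterize points on the global attractor $\AA$ via complete bounded trajectories and then exploit the explicit representation formula~\eqref{REP} for the memory component. Let $(u(t),\eta^t)=S(t)z_0$ for $z_0\in\AA$; since $\AA$ is strictly invariant and bounded in $\V$ (hence in $\H^1$), this trajectory extends to a complete trajectory $(u(t),\eta^t)$, $t\in\R$, with $\sup_{t\in\R}\|(u(t),\eta^t)\|_{\H^1}<\infty$. For any fixed $t\in\R$ and any $T>0$, apply $S$ starting from time $t-T$: by~\eqref{REP}, with the role of the initial time played by $t-T$ and the initial history $\eta^{t-T}$, we get for $s\le T$ that $\eta^t(s)=\int_0^s u(t-y)\,\d y$, while for $s>T$ we have $\eta^t(s)=\eta^{t-T}(s-T)+\int_0^T u(t-y)\,\d y$.

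Next I would let $T\to\infty$ to kill the contribution of the ``old'' history term. For the first branch, the conclusion $\eta^t(s)=\int_0^s u(t-y)\,\d y$ holds for every $s\le T$, so letting $T\to\infty$ gives this identity for all $s>0$, which is exactly~\eqref{AUXV}. The only thing to check is that this is consistent with the second branch as $T\to\infty$; that is, that $\eta^{t-T}(s-T)\to 0$ in an appropriate sense and that $\int_0^T u(t-y)\,\d y$ stabilizes. Actually, once the first branch already gives the formula for all $s$ (since every $s$ is eventually $\le T$), no further argument is strictly needed — but to make the limit rigorous I would note that for fixed $s$, as soon as $T>s$ we are in the first branch and the value of $\eta^t(s)$ is literally $\int_0^s u(t-y)\,\d y$, independent of $T$. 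Thus the formula is forced.

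The one point requiring a small amount of care is the well-definedness of $\int_0^s u(t-y)\,\d y$ as an element of $H^1$ (or at least $H$), i.e.\ that $u$ is locally integrable in time with values in the relevant space along the complete trajectory. This follows from the regularity of complete bounded trajectories: by strict invariance $(u(\tau),\eta^\tau)\in\AA\subset\V$ for all $\tau$, so $u(\tau)\in H^2$ uniformly, and $t\mapsto u(t)$ is continuous into $H^1$ (strong continuity of $S(t)$ together with the group property on $\AA$ from Corollary~\ref{BU}); hence the integral is a genuine Bochner integral. I expect the main (mild) obstacle to be precisely this bookkeeping: making sure the representation formula~\eqref{REP}, originally derived for forward trajectories issued from a phase-space datum, can be legitimately applied at every base time $t-T$ of a complete trajectory, and that the passage $T\to\infty$ is justified uniformly in $s$ on bounded $s$-intervals. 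None of this is deep given the results already established; it is essentially an unwinding of~\eqref{REP} along the bi-infinite orbit.
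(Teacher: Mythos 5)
Your argument is correct and coincides with the paper's own proof: the paper likewise sets $z_\tau=S(-\tau)Z(0)$ (using the group property on $\AA$), applies the representation formula~\eqref{REP} to the forward trajectory issued from $z_\tau$, obtains $\eta^t(s)=\int_0^s u(t-y)\,\d y$ for $s\le t+\tau$, and concludes by the arbitrariness of $\tau$. Your additional remarks on the Bochner integrability of $u$ along the complete trajectory are a harmless elaboration of bookkeeping the paper leaves implicit.
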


\begin{remark}
In particular, we obtain the uniform estimates
$$\sup_{t\in\R}\|\eta^t(s)\|_2\leq c_0s\quad\text{and}\quad
\sup_{t\in\R}\sup_{s>0}\|(\eta^t)'(s)\|_2\leq c_0,
$$
for every $(u(t),\eta^t)$ lying on the attractor, with
$c_0=\sup\{\|u_0\|_2 : (u_0,\eta_0)\in\AA\}$.
\end{remark}

We now focus our attention on $S(t)$ as a semigroup on the phase
space $\H^0$. Indeed, the set $\EE$ of Theorem~\ref{MAIN} turns out
to be an exponential attractor on $\H^0$ as well.

\begin{corollary}
\label{CORMAIN} We have
$$\dist_{\H^0}(S(t) \B,\EE)
\leq\II\big(\|\B\|_{\H^0}\big) \e^{-\omega_0 t},
$$
for some $\omega_0>0$ and every bounded set $\B\subset\H^0$.
\end{corollary}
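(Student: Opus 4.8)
The plan is to read off the $\H^0$-attraction from the $\H^1$-version already at our disposal (Corollary~\ref{MAINcorcor}) by invoking the transitivity principle for exponential attraction, using \eqref{LOWATT} as the first link of the chain $\B\rightsquigarrow\BB_1\rightsquigarrow\EE$, all Hausdorff semidistances being measured in $\H^0$. Since $H^1\subset H^0$ and $\M^2\subset\M^1$, with $\|\cdot\|_{\H^0}\leq\lambda_1^{-1/2}\|\cdot\|_{\H^1}$, the set $\EE$ is compact also in $\H^0$, and applying Corollary~\ref{MAINcorcor} to the $\H^1$-bounded absorbing ball $\BB_1$ gives $\dist_{\H^0}(S(t)\BB_1,\EE)\leq\lambda_1^{-1/2}\II(R_1)\e^{-\omega_1 t}$. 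Coupled with \eqref{LOWATT}, which reads $\dist_{\H^0}(S(t)\B,\BB_1)\leq\II(\|\B\|_{\H^0})\e^{-\eps_0 t}$ for every bounded $\B\subset\H^0$, these are the two outer links of a transitivity argument with $\BB_1$ as intermediate set, so that the standard transitivity lemma for exponentially attracting sets yields $\dist_{\H^0}(S(t)\B,\EE)\leq\II(\|\B\|_{\H^0})\e^{-\omega_0 t}$ with $\omega_0=\eps_0\omega_1/(\eps_0+\omega_1+K)>0$, \emph{provided} one has a uniform $\H^0$-Lipschitz bound $\|S(t)z_1-S(t)z_2\|_{\H^0}\leq C\e^{Kt}\|z_1-z_2\|_{\H^0}$ on the bounded invariant sets in play.

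The only nontrivial ingredient is this Lipschitz bound. Denoting by $(\bar u,\bar\eta)$ the difference of the two corresponding solutions of \eqref{SYS}, testing the $u$-equation in $H^0$ and the $\eta$-equation in $\M^1$ and exploiting \eqref{TTT} to cancel the memory contributions leads to
\[
\ddt\big(\|\bar u\|_0^2+\|\bar\eta\|_{\M^1}^2\big)+2\|\bar u\|_1^2\leq 2\big|\l\varphi(u_1)-\varphi(u_2),\bar u\r\big|.
\]
Writing $\varphi(u_1)-\varphi(u_2)=\varphi'(\xi)\bar u$ pointwise and using \eqref{critical} at $p=3$, the right-hand side involves $\int_\Omega(|u_1|^4+|u_2|^4)|\bar u|^2\,\d\boldsymbol{x}$, a quantity that is neither dominated by $\|\bar u\|_0^2$ nor absorbable into the coercive term $\|\bar u\|_1^2$ as long as the $u_i$ are merely controlled in $H^1\subset L^6$. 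This is the critical obstruction, and precisely the reason why the analysis of \cite{Amnesia} stops at $p<3$; I expect it to be the main difficulty here as well.

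The obstruction is removed by the very mechanism on which Theorem~\ref{MAIN} rests. By the instantaneous regularization of Proposition~\ref{propREG}, $S(t_\star)$ sends bounded subsets of $\H^1$ into bounded subsets of $\V$ for any $t_\star>0$, so along every trajectory issued from $\BB_1$ — and, after the $\H^0$-absorbing time, from an arbitrary bounded $\B\subset\H^0$ — the component $u$ stays bounded in $H^2\subset L^\infty(\Omega)$, uniformly for $t\geq t_\star$. Running the transitivity chain from that instant on (the finitely many earlier instants being harmless, since $S(t)\B$ is $\H^0$-bounded and $\EE$ is a fixed set), one has $|\l\varphi(u_1)-\varphi(u_2),\bar u\r|\leq c_\star\|\bar u\|_0^2$ with $c_\star$ depending only on the $L^\infty$-bound of the $u_i$, whence the Lipschitz bound follows at once from Gronwall's lemma (with $C=1$, $K=c_\star$); the same regularization is what lets $\bigcup_{t\geq0}S(t)\B$ enter, in finite time, a fixed bounded subset of $\V$. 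Feeding these facts into the transitivity lemma gives the stated estimate with $\omega_0=\eps_0\omega_1/(\eps_0+\omega_1+c_\star)$. In short, the whole difficulty is concentrated in the Lipschitz estimate at the critical exponent $p=3$, and it is exactly the regularization underlying Theorem~\ref{MAIN} that defuses it.
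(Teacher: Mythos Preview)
Your overall architecture---transitivity of exponential attraction with \eqref{LOWATT} and Corollary~\ref{MAINcorcor} as the two links and $\BB_1$ as intermediate set---is exactly the paper's. The divergence is entirely in the $\H^0$-Lipschitz estimate, and there you have both overcomplicated matters and introduced a genuine gap.

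The ``critical obstruction'' you describe is an artifact of taking an absolute value you do not need. Testing the difference equation in $\H^0$ yields
\[
\ddt\|\bar Z\|_{\H^0}^2+2\|\bar u\|_1^2\leq -2\l\varphi(u_1)-\varphi(u_2),\bar u\r
=-2\int_\Omega\varphi'(\xi)\,\bar u^2,
\]
and assumption~\eqref{DERIVATA} (together with $\varphi\in\C^2$) gives a \emph{global} lower bound $\varphi'\geq -c$, so the right-hand side is $\leq 2c\|\bar u\|^2$. Gronwall then produces
\[
\|S(t)z_1-S(t)z_2\|_{\H^0}\leq \e^{ct}\|z_1-z_2\|_{\H^0},\qquad\forall z_1,z_2\in\H^0,
\]
valid on the whole space, with no restriction on $p$ and no appeal to regularization. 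This is precisely the paper's route.

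Your workaround via $L^\infty$-bounds has a gap: the claim that ``$\bigcup_{t\geq0}S(t)\B$ enter[s], in finite time, a fixed bounded subset of $\V$'' for bounded $\B\subset\H^0$ is false. The memory component does not smooth---by the representation~\eqref{REP}, $\eta^t(s)$ retains $\eta_0(s-t)$ for $s>t$, so if $\eta_0\in\M^1\setminus\M^2$ then $\eta^t\notin\M^2$ for all $t$. Proposition~\ref{propREG} requires initial data in $\H^1$, and \eqref{LOWATT} gives only $\H^0$-proximity to $\BB_1$, not membership. The transitivity lemma needs the Lipschitz bound between $S(t_1)z$ (which for $z\in\B$ lies merely in $\H^0$) and a nearby $z'\in\BB_1$; your $L^\infty$ argument does not cover this pair. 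So the detour through regularization is not only unnecessary but, as written, incomplete.
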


\begin{corollary}
\label{CORATT} The set $\AA$ is also the global attractor for the
semigroup $S(t)$ on $\H^0$; namely,
$$\lim_{t\to\infty}\big[\dist_{\H^0}(S(t) \B,\AA)\big]=0,
$$
whenever $\B$ is a bounded subset of $\H^0$.
\end{corollary}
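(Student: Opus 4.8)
The plan is to prove that the compact set $\AA$ furnished by Theorem~\ref{THMATT} coincides with the global attractor of $S(t)$ on $\H^0$, whose existence was established in~\cite{Amnesia}; denote the latter by $\AA_0$. Recall that $\AA_0$ is characterized as the unique compact, strictly invariant subset of $\H^0$ attracting every bounded subset of $\H^0$ in the $\H^0$-metric, equivalently as the union of the sections at $t=0$ of all complete trajectories of $S(t)$ bounded in $\H^0$. Once $\AA=\AA_0$ is known, the convergence claimed in the Corollary is just the defining attraction property of $\AA_0$, so everything reduces to the two inclusions $\AA\subseteq\AA_0$ and $\AA_0\subseteq\AA$.

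The inclusion $\AA\subseteq\AA_0$ is soft: since $\V\hookrightarrow\H^0$ with continuous embedding, the set $\AA$ --- compact in $\V$ and strictly invariant by Theorem~\ref{THMATT} --- is a compact, strictly invariant subset of $\H^0$, hence is contained in the maximal one, namely $\AA_0$. The reverse inclusion is the crux, and consists in bootstrapping the regularity of $\AA_0$ from $\H^0$ up to $\V$. First, applying the exponential attraction~\eqref{LOWATT} to $\B=\AA_0$ and using $S(t)\AA_0=\AA_0$ gives $\dist_{\H^0}(\AA_0,\BB_1)\leq\II\big(\|\AA_0\|_{\H^0}\big)\e^{-\eps_0 t}$ for every $t>0$, whence $\dist_{\H^0}(\AA_0,\BB_1)=0$. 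Now $\BB_1=\Ball_{\H^1}(R_1)$ is closed in $\H^0$ --- a sequence bounded in the Hilbert space $\H^1=H^1\times\M^2$ admits a weakly convergent subsequence, whose weak limit must coincide with any $\H^0$-strong limit of the sequence, and $\|\cdot\|_{\H^1}$ is weakly lower semicontinuous --- so $\AA_0\subseteq\BB_1$, i.e.\ $\AA_0$ is bounded in $\H^1$. Then, by Proposition~\ref{propREG} (see also the Remark following Theorem~\ref{MAIN}), $\AA_0=S(1)\AA_0$ is bounded in $\V$; being bounded in $\V$ and strictly invariant, $\AA_0$ is made of complete $S(t)$-trajectories bounded in $\V$, and each such trajectory lies in $\AA$. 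Hence $\AA_0\subseteq\AA$.

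Putting the two inclusions together gives $\AA=\AA_0$, which proves the Corollary. The only step that is not entirely routine is the regularization of $\AA_0$ just performed --- and even this is little more than a repackaging of the already available estimate~\eqref{LOWATT} and Proposition~\ref{propREG}, glued together by the elementary fact that balls of $\H^1$ are closed in $\H^0$; once the $\H^0$-global attractor is known to be bounded in $\H^1$ the rest is standard, in accordance with the comment made in the Introduction. One could equally well bypass $\AA_0$ and verify $\AA$ directly to be compact in $\H^0$, strictly invariant, and $\H^0$-attracting, the attraction following from~\eqref{LOWATT} together with Theorem~\ref{THMATT} as soon as the trajectories originating from a bounded subset of $\H^0$ are shown to enter a bounded subset of $\H^1$ in finite time; and one may note, as a consistency check, that by Corollary~\ref{CORMAIN} the set $\AA$ is in any case contained in the $\H^0$-exponential attractor $\EE$.
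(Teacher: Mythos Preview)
Your proof is correct, but it proceeds along a genuinely different route from the paper. The paper dispatches the Corollary in one line: it is declared a byproduct of Corollary~\ref{CORMAIN} and the $\V$-regularity of $\EE$. Concretely, Corollary~\ref{CORMAIN} provides a set $\EE$ that is compact in $\V$ (hence in $\H^0$) and attracts every bounded subset of $\H^0$ in the $\H^0$-metric; standard attractor theory then yields an $\H^0$-global attractor inside $\EE$, and since $\AA\subset\EE$ is strictly invariant and attracts $\EE$ (Theorem~\ref{THMATT}), this attractor is $\AA$.

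You instead bypass Corollary~\ref{CORMAIN} entirely: you take the $\H^0$-global attractor $\AA_0$ from~\cite{Amnesia}, bootstrap its regularity to $\V$ via~\eqref{LOWATT} and Proposition~\ref{propREG}, and then identify $\AA_0$ with $\AA$ through the characterization by complete bounded trajectories. This buys a self-contained argument that does not rely on the transitivity-of-exponential-attraction machinery underlying Corollary~\ref{CORMAIN}, at the cost of invoking the existence of $\AA_0$ from~\cite{Amnesia} and the small technical check that $\BB_1$ is $\H^0$-closed. The paper's route is shorter given what has already been established, while yours makes the coincidence $\AA=\AA_0$ explicit and would work even if Corollary~\ref{CORMAIN} had not been proved. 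Your closing remark already sketches the paper's path as an alternative.
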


The remaining of the paper is devoted to the proofs of the results.

\section{An Auxiliary Problem}

This section deals with the analysis of the Cauchy problem in the
variable $Z(t)=(u(t),\eta^t)$
\begin{equation}
\label{SYSLEMMA}
\begin{cases}
\displaystyle
\pt u+A  u + \int_0^\infty \mu(s)A\eta(s)\,\d s =f+g,\\
\pt \eta=T\eta+ u, \\
\noalign{\vskip1.5mm} Z(0)=z,
\end{cases}
\end{equation}
where $z=(u_0,\eta_0)\in\H^1$, $f\in H$ is independent of time and
$g\in L^2_{\rm loc}(\R^+;H^1)$.

\smallskip
We need a definition and a preliminary lemma.

\begin{definition}
A nonnegative function $\Lambda$ on $\R^+$ is said to be {\it
translation bounded} if
$$
\t(\Lambda):=\sup_{t\geq 0}\int_t^{t+1}
\Lambda(\tau)\,\d\tau<\infty.
$$
\end{definition}

\begin{lemma}
\label{LEMMAGRNWGEN} For $\imath=0,1,2$, let $\Lambda_\imath$ be
nonnegative functions such that $\t(\Lambda_\imath)\leq m_\imath$.
Assuming $\Lambda_0$ absolutely continuous, let the differential
inequality
$$
\frac{\d}{\d t} \Lambda_0 \leq \Lambda_0\Lambda_1+\Lambda_2
$$
hold almost everywhere in $\R^+$. Then, for every $t\geq 0$,
$$\textstyle\Lambda_0(t)\leq \e^{m_1}\Lambda_0(0)\e^{- t}+
\frac{ \e^{m_1}(m_0+m_0m_1+m_2)}{1-\e^{-1}}.$$
\end{lemma}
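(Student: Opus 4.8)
The plan is to run the classical integral form of Gronwall's inequality on time intervals of length at most one — so that the translation bound on $\Lambda_1$ can be applied with no exponential blow-up — and then to read off the decay in $t$ from the translation bound on $\Lambda_0$ itself, which is the only place a damping can come from.

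First I would integrate the differential inequality. For $0\le\tau\le t$ with $t-\tau\le 1$, multiplying $\frac{\d}{\d t}\Lambda_0\le\Lambda_0\Lambda_1+\Lambda_2$ by the integrating factor $\exp(-\int_\tau^s\Lambda_1)$ and integrating over $[\tau,t]$ gives
$$\Lambda_0(t)\le\Lambda_0(\tau)\exp\Big(\int_\tau^t\Lambda_1\Big)+\int_\tau^t\Lambda_2(\sigma)\exp\Big(\int_\sigma^t\Lambda_1\Big)\,\d\sigma.$$
Since $t-\tau\le1$, every inner integral of $\Lambda_1$ runs over a subinterval of a unit interval, hence is $\le\t(\Lambda_1)\le m_1$, and likewise $\int_\tau^t\Lambda_2\le m_2$; therefore
$$\Lambda_0(t)\le \e^{m_1}\big(\Lambda_0(\tau)+m_2\big),\qquad 0\le t-\tau\le1,$$
an estimate I will call $(\star)$.

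Next I would split into two regimes. For $t\ge1$, averaging $(\star)$ in $\tau$ over $[t-1,t]$ and using $\t(\Lambda_0)\le m_0$ gives $\Lambda_0(t)\le\e^{m_1}(m_0+m_2)$, already below the claimed bound since $(1-\e^{-1})^{-1}\ge1$. For $0<t<1$ I would extract from $(\star)$ two competing estimates: the choice $\tau=0$ gives $\Lambda_0(t)\le\e^{m_1}(\Lambda_0(0)+m_2)$, whereas averaging $(\star)$ in $\tau$ over $[0,t]$ and using $\int_0^t\Lambda_0\le\int_0^1\Lambda_0\le m_0$ gives $\Lambda_0(t)\le\e^{m_1}(m_0/t+m_2)$. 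To finish, I would distinguish the cases $t\Lambda_0(0)\le m_0$ and $t\Lambda_0(0)>m_0$: in the first, the estimate $\Lambda_0(t)\le\e^{m_1}(\Lambda_0(0)+m_2)$ together with $1-\e^{-t}\le t$ yields $\e^{m_1}\Lambda_0(0)=\e^{m_1}\Lambda_0(0)\e^{-t}+\e^{m_1}\Lambda_0(0)(1-\e^{-t})\le\e^{m_1}\Lambda_0(0)\e^{-t}+\e^{m_1}m_0$; in the second, $\Lambda_0(t)\le\e^{m_1}(m_0/t+m_2)$ together with $(1-\e^{-t})/t\le1$ and $m_0/t<\Lambda_0(0)$ yields $\e^{m_1}m_0/t\le\e^{m_1}(m_0/t)\e^{-t}+\e^{m_1}m_0\le\e^{m_1}\Lambda_0(0)\e^{-t}+\e^{m_1}m_0$. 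Either way $\Lambda_0(t)\le\e^{m_1}\Lambda_0(0)\e^{-t}+\e^{m_1}(m_0+m_2)$, which is dominated by the right-hand side of the statement; the case $t=0$ is trivial.

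The routine parts are the Gronwall step and the bookkeeping with the translation bounds — the only care needed is to keep every integration of $\Lambda_1$ on an interval of length $\le1$. The genuine point, and where I expect the main difficulty, is manufacturing the factor $\e^{-t}$: the differential inequality carries no dissipation, so the decay must come from $\t(\Lambda_0)\le m_0$, and the mechanism is exactly the dichotomy above — playing the bound coming from the initial value against the bound coming from averaging over the shrinking window $[0,t]$, and closing the gap with $1-\e^{-t}\le t$. (This route actually produces the smaller constant $\e^{m_1}(m_0+m_2)$; the stated $\e^{m_1}(m_0+m_0m_1+m_2)/(1-\e^{-1})$ then follows a fortiori.)
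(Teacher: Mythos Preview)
Your argument is correct and in fact yields the sharper absorbing constant $\e^{m_1}(m_0+m_2)$, but the route is genuinely different from the paper's. The paper does not split into unit intervals or run any case analysis: instead it rewrites the differential inequality by setting $\tilde\Lambda_1=\Lambda_1-1-m_1$ and $\tilde\Lambda_2=(1+m_1)\Lambda_0+\Lambda_2$, so that $\frac{\d}{\d t}\Lambda_0\le\Lambda_0\tilde\Lambda_1+\tilde\Lambda_2$ with $\int_\tau^t\tilde\Lambda_1\le-(t-\tau)+m_1$ on \emph{every} interval. A single Gronwall step on $[0,t]$ then gives $\Lambda_0(t)\le\e^{m_1}\Lambda_0(0)\e^{-t}+\e^{m_1}\int_0^t\e^{-(t-\tau)}\tilde\Lambda_2(\tau)\,\d\tau$, and the last integral is bounded by $\t(\tilde\Lambda_2)/(1-\e^{-1})$ via the standard geometric sum over unit blocks; this is exactly where the extra $m_0m_1$ and the factor $(1-\e^{-1})^{-1}$ appear. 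The paper's trick is cleaner and more robust --- the dissipation is manufactured in one line by shifting $\Lambda_1$ and absorbing the compensation $(1+m_1)\Lambda_0$ into the translation-bounded forcing, and the same device adapts immediately to produce any prescribed decay rate. Your approach trades that structural idea for an elementary but somewhat ad hoc dichotomy on $t\Lambda_0(0)\lessgtr m_0$; the payoff is a better constant, the cost is that the mechanism (playing the initial-value bound against the averaged bound, closed with $1-\e^{-t}\le t$) is specific to this situation and does not generalize as transparently.
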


\begin{proof}
Setting
$$\tilde\Lambda_1(t)=\Lambda_1(t)-1-m_1,\quad
\tilde\Lambda_2(t)=(1+m_1)\Lambda_0(t)+\Lambda_2(t),$$ we rewrite
the differential inequality as
$$
\frac{\d}{\d t} \Lambda_0 \leq
\Lambda_0\tilde\Lambda_1+\tilde\Lambda_2.
$$
Observe that
$$
\int_\tau^{t}\tilde\Lambda_1(s)\,\d s\leq -(t-\tau)+m_1,\quad\forall
t\geq\tau,
$$
and
$$
\t(\tilde\Lambda_2)\leq m_0+m_0m_1+m_2.
$$
Hence, an application of the Gronwall lemma entails
$$\Lambda_0(t)\leq \e^{m_1}\Lambda_0(0)\e^{-t}+
\e^{m_1}\int_0^t\e^{-(t-\tau)}\tilde\Lambda_2(\tau)\,\d\tau,$$ and
the inequality (cf.\ \cite{CPV})
$$
\int_0^t\e^{-(t-\tau)}\tilde\Lambda_2(\tau)\,\d\tau \leq
\textstyle\frac{1}{1-\e^{-1}}\;\t(\tilde\Lambda_2)
$$
yields the desired result.
\end{proof}

\smallskip

Given $Z=(u,\eta)\in\V$ and $f\in H$, we define the functional
$$\Lambda[Z,f]=
\|u\|_2^2+\alpha\|Z\|_{\H^1}^2+2\l\eta,u \r_{\M^2}-2\l f,Au\r
+8\|f\|^2,
$$
with $\alpha>0$ large enough such that
\begin{equation}
\label{QLE} \textstyle\frac12\|Z\|_\V^2\leq \Lambda[Z,f]\leq
2\alpha\|Z\|_\V^2+\alpha\|f\|^2.
\end{equation}
Moreover, given $u\in L^2_{\rm loc}(\R^+;H^2)$, we set
$$F(t;u)=\int_0^t\mu(t-s)\|u(s)\|_2^2\,\d s.$$

\begin{remark}
\label{remQUIM} Exchanging the order of integration, we have
\begin{equation}
\label{zippo} \t\big(F(\cdot,u)\big)\leq\kappa_0
\t\big(\|u\|_2^2\big).
\end{equation}
\end{remark}

We now state and prove several results on the solution $Z(t)$ to
problem~\eqref{SYSLEMMA}.

\begin{lemma}
\label{QWYM} There is a structural constant $\alpha>0$, large enough
to comply with \eqref{QLE}, such that the functional
$$\Lambda(t)=\Lambda[Z(t),f]$$
satisfies (within the approximation scheme) the differential
inequality
\begin{equation}
\label{KEYIN} \frac{\d}{\d t} \Lambda(t) \leq \mu(t)\Lambda(t)+\th
F(t;u) +\th\|f\|^2+\th\|g(t)\|_1^2,
\end{equation}
for some positive constant $\th=\th(\alpha)$.
\end{lemma}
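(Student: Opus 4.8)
The plan is to differentiate $\Lambda(t)=\Lambda[Z(t),f]$ along the solution $Z(t)$ of \eqref{SYSLEMMA} — all computations being performed inside the Galerkin scheme, as warned — and then to reorganize the resulting terms so as to match the right-hand side of \eqref{KEYIN}. Differentiating the five summands of $\Lambda$ in turn: from $\frac{\d}{\d t}\|u\|_2^2$, obtained by testing the first equation of \eqref{SYSLEMMA} against $A^2u$, one extracts the dissipation $-2\|u\|_3^2$, the memory term $-2\langle\int_0^\infty\mu(s)A\eta(s)\,\d s,\,A^2u\rangle$, and the forcing $2\langle f+g,A^2u\rangle$; from $\frac{\d}{\d t}\big(\alpha\|u\|_1^2\big)$, obtained by testing against $Au$, one extracts the dissipation $-2\alpha\|u\|_2^2$ together with $-2\alpha\langle\eta,u\rangle_{\M^2}$ and $2\alpha\langle f+g,Au\rangle$; from $\frac{\d}{\d t}\big(\alpha\|\eta\|_{\M^2}^2\big)$ one has, by \eqref{TTT}, the bound $2\alpha\langle T\eta,\eta\rangle_{\M^2}+2\alpha\langle u,\eta\rangle_{\M^2}\le 2\alpha\langle u,\eta\rangle_{\M^2}$, the cross term cancelling that of the previous line; the constant $8\|f\|^2$ contributes nothing; and $-2\langle f,Au\rangle$ is the ``energy-reconstruction'' correction, placed precisely (in tandem with the constant $8\|f\|^2$) so that the contributions of $f$ of order higher than $\|f\|$ produced above cancel, leaving only $\|f\|^2$-controlled remainders. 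The cross summand $2\langle\eta,u\rangle_{\M^2}$ itself produces $2\langle T\eta,u\rangle_{\M^2}$, $2\kappa_0\|u\|_2^2$, and further memory and forcing terms.

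After collecting, one is left with: (i) a dissipative block $-c\big(\|u\|_2^2+\|u\|_3^2\big)$ for some $c>0$; (ii) lower-order cross terms — $\langle u,Au\rangle$, $\langle\eta,u\rangle_{\M^2}$, $\langle g,Au\rangle$, and the like — which, once $\alpha$ is fixed large enough, Young's inequality absorbs into (i), thereby fixing the structural constant $\alpha$ and hence $\th=\th(\alpha)$; (iii) forcing remainders bounded by $\th\|f\|^2+\th\|g(t)\|_1^2$, where, e.g., $\langle g,A^2u\rangle=\langle A^{1/2}g,A^{3/2}u\rangle$ is split against the $\|u\|_3^2$ dissipation by Young's inequality; and (iv) the memory terms $2\langle T\eta,u\rangle_{\M^2}$ and $-2\langle\int_0^\infty\mu(s)A\eta(s)\,\d s,\,A^2u\rangle$.

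Step (iv) is the heart of the matter, and the part I expect to be hardest: estimating it by Cauchy--Schwarz alone would call for control of $\eta^t$ in a space strictly more regular than $\M^2$ (and of $\eta_0$ beyond what $z\in\H^1$ provides), which is not available, so a different route is needed. The idea is to insert the explicit representation formula \eqref{REP} for $\eta^t(s)$ and split the $s$-integrals at $s=t$. On the recent history $\{s\le t\}$, where $\eta^t(s)=\int_0^s u(t-y)\,\d y$, exchanging the order of integration turns the memory term into a convolution of $u$ against the kernel $\kappa$; using $\kappa(s)\le\Theta\mu(s)$ from \eqref{NEC}, Young's convolution inequality, and one more Young splitting against the dissipative block (i), this is bounded by a multiple of $F(t;u)=\int_0^t\mu(t-s)\|u(s)\|_2^2\,\d s$. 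On the distant history $\{s>t\}$ only $\eta_0$ and the kernel tails $\int_t^\infty\mu=\kappa(t)$ (equivalently $\int_t^\infty|\mu'|=\mu(t)$) remain; since $\mu$ is nonincreasing and $\kappa(t)\le\Theta\mu(t)$, each such contribution carries the factor $\mu(t)$, and a final Young's inequality together with the equivalence \eqref{QLE} absorbs it into $\mu(t)\Lambda(t)$.

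Summing (i)--(iv), discarding the nonpositive block (i), and renaming all the constants generated along the way into a single $\th=\th(\alpha)$ gives \eqref{KEYIN}. Apart from step (iv) — where the delicacy is to make the recent-history piece collapse exactly onto $F(t;u)$ and the distant-history piece onto the coefficient $\mu(t)$ of $\Lambda(t)$, with all constants independent of the Galerkin index and of the data — this is a routine, if somewhat lengthy, energy computation of the kind carried out in \cite{Amnesia}.
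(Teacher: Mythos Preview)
Your approach has a genuine gap: you differentiate $\|u\|_2^2$ by testing the first equation against $A^2u$, which is the natural \emph{parabolic} multiplier, but this produces terms that cannot be closed at the regularity available. Concretely, the memory term you obtain is
\[
-2\int_0^\infty\mu(s)\,\l A\eta^t(s),A^2u(t)\r\,\d s
=-2\int_0^\infty\mu(s)\,\l \eta^t(s),u(t)\r_3\,\d s,
\]
which, no matter how you split it, demands either $\eta^t\in\M^3$ or $u(t)\in H^4$. Your representation-formula splitting does not rescue this: on the distant history $\{s>t\}$ you are left with
$-2\int_0^\infty\mu(\sigma+t)\,\l\eta_0(\sigma),u(t)\r_3\,\d\sigma$,
and since $z\in\H^1$ gives only $\eta_0\in\M^2$, there is no way to absorb this into $\mu(t)\Lambda(t)$ (which controls only $\|\eta^t\|_{\M^2}^2$). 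The same obstruction hits the forcing: $2\l f,A^2u\r$ cannot be bounded by $\|f\|^2$ plus a fraction of $\|u\|_3^2$ unless $f\in H^1$, which is not assumed; and differentiating the correction $-2\l f,Au\r$ along the equation contributes \emph{another} $+2\l f,A^2u\r$, so the hoped-for cancellation goes the wrong way.

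The paper avoids all of this by a \emph{hyperbolic} multiplier: it tests the first equation against $A\pt u$ rather than $A^2u$. Then the $f$-term becomes the exact derivative $\frac{\d}{\d t}\,2\l f,Au\r$ (this is the real reason $-2\l f,Au\r$ sits inside $\Lambda$), the $g$-term becomes $2\l g,\pt u\r_1$ and is absorbed by the new dissipation $2\|\pt u\|_1^2$, and the memory term becomes $2\l\eta,\pt u\r_{\M^2}$, which---after using $\pt\eta=T\eta+u$---is rewritten as $\frac{\d}{\d t}\,2\l\eta,u\r_{\M^2}-2\l T\eta,u\r_{\M^2}-2\kappa_0\|u\|_2^2$, living entirely at the $\M^2$ level. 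The crucial estimate is then
\[
2\l T\eta,u\r_{\M^2}\leq \|u\|_2^2+\mu(t)\|u\|_2^2+\alpha F(t;u)-2\alpha\l T\eta,\eta\r_{\M^2},
\]
imported from \cite[Lemma~4.3]{DPZ}; this is precisely where the factors $\mu(t)$ and $F(t;u)$ in \eqref{KEYIN} come from, and it requires nothing beyond $\eta\in\M^2$ (and $T\eta\in\M^2$ within the Galerkin scheme). Adding $\alpha$ times the standard $\H^1$-energy identity then yields \eqref{KEYIN}. Your step (iv) is indeed ``the heart of the matter'', but the route through \eqref{REP} does not lower the regularity demand; the correct device is to change the test function so that the demand never arises.
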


\begin{proof}
Multiplying the first equation of \eqref{SYSLEMMA} by $A\pt u$, and
using the second equation, we obtain the differential equality
$$
\frac{\d}{\d t} \big\{\|u\|_2^2+2\l\eta,u \r_{\M^2}-2\l f,Au
\r\big\} + 2\|\pt u\|^2_1= 2\kappa_0\|u\|_2^2+ 2 \l T\eta,u
\r_{\M^2} + 2 \l g, \pt u\r_1.
$$
Arguing exactly as in \cite[Lemma 4.3]{DPZ}, we find $\alpha>0$,
depending only on the total mass $\kappa_0$ of $\mu$, such that
\begin{equation}
\label{VRR} 2\l T\eta, u\r_{\M^2}\leq \|u\|_2^2+\mu\|u\|_2^2+\alpha
F -2\alpha\l T \eta,\eta\r_{\M^2}.
\end{equation}
Clearly, due to \eqref{TTT}, the estimate is still valid for a
larger $\alpha$. Thus, controlling the last term as
$$2 \l g, \pt u\r_1\leq \|g\|_1^2+\|\pt u\|^2_1,$$
we end up with
\begin{align}
\label{TOPO}
&\frac{\d}{\d t} \big\{\|u\|_2^2+2\l\eta,u \r_{\M^2}-2\l f,Au \r\big\}\\
&\leq (1+2\kappa_0)\|u\|_2^2 +\mu\|u\|_2^2+\alpha F -2\alpha\l T
\eta,\eta\r_{\M^2}+ \|g\|_1^2.\notag
\end{align}
A further multiplication of~\eqref{SYSLEMMA} by $Z$ in $\H^1$
entails
$$
\frac{\d}{\d t}\|Z\|_{\H^1}^2 +2\|u\|_2^2 =2\l T
\eta,\eta\r_{\M^2}+2\l f,Au \r+ 2 \l g,u\r_1.
$$
Exploiting the straightforward relation
$$2\l f,Au \r+ 2 \l g,u\r_1\leq
\|u\|_2^2+2\|f\|^2+2\lambda_1^{-1}\|g\|_1^2,$$ we are led to the
inequality
\begin{equation}
\label{LINO} \frac{\d}{\d t}\|Z\|_{\H^1}^2+\|u\|_2^2 \leq 2\l T
\eta,\eta\r_{\M^2}+2\|f\|^2+2\lambda_1^{-1}\|g\|_1^2.
\end{equation}
We now choose $\alpha\geq 1+2\kappa_0$ such that \eqref{QLE} and
\eqref{VRR} hold. Adding \eqref{TOPO} and $\alpha$-times
\eqref{LINO}, we finally get \eqref{KEYIN}.
\end{proof}

\begin{lemma}
\label{QWYM1} Assume that
$$\t\big(\|Z\|_\V^2\big)\leq \beta\quad\text{and}\quad
\|g(t)\|_1\leq \gamma\big(1+\|Z(t)\|_\V^2\big),$$ for some
$\beta,\gamma>0$. Then, there exists $D=D(\beta,\gamma,\|f\|)>0$
such that
$$\|Z(t)\|_\V\leq D\Big(\frac{1}{\sqrt{t}}+1\Big).
$$
If $z\in\V$, the estimate improves to
$$\|Z(t)\|_\V\leq D\|z\|_\V\,\e^{-t}+D.
$$
\end{lemma}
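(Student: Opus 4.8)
The strategy is to recast \eqref{KEYIN} in the form $\ddt\Lambda\le\Lambda_1\Lambda+\Lambda_2$ treated by Lemma~\ref{LEMMAGRNWGEN}, once the forcing term $\th\|g(t)\|_1^2$ has been handled. Under the standing assumption $\|g(t)\|_1\le\gamma(1+\|Z(t)\|_\V^2)$ this term is only bounded by $\th\gamma^2(1+\|Z(t)\|_\V^2)^2$, which is \emph{quartic} in $\|Z\|_\V$; and although $t\mapsto\|Z(t)\|_\V^2$ is translation bounded by hypothesis, $t\mapsto\|Z(t)\|_\V^4$ need not be. The way around this is to peel off a single power: writing $y(t)=\|Z(t)\|_\V^2$ and using the lower bound $y\le 2\Lambda$ from \eqref{QLE}, one has $(1+y)^2\le(1+y)(1+2\Lambda)=(1+y)+2(1+y)\Lambda$, so that \eqref{KEYIN} yields
$$\ddt\Lambda\le\Lambda_1\Lambda+\Lambda_2,\qquad\Lambda_1=\mu+2\th\gamma^2(1+y),\qquad\Lambda_2=\th F(\cdot;u)+\th\|f\|^2+\th\gamma^2(1+y).$$
In other words, the dangerous part of $\|g\|_1^2$ is absorbed into the coefficient multiplying $\Lambda$, and what remains is only the translation-bounded quantity $1+y$.

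Next I would bookkeep the translation bounds. Since $\int_0^\infty\mu=\kappa_0$ we have $\t(\mu)\le\kappa_0$; since $\|u(t)\|_2\le\|Z(t)\|_\V$, the hypothesis gives $\t(\|u\|_2^2)\le\beta$ and hence $\t(F(\cdot;u))\le\kappa_0\beta$ by \eqref{zippo}; and $\t(1+y)\le 1+\beta$. Combined with the upper bound in \eqref{QLE}, which yields $\t(\Lambda)\le 2\alpha\beta+\alpha\|f\|^2$, this produces constants $m_0,m_1,m_2$ depending only on $\beta,\gamma,\|f\|$ (besides the structural $\alpha,\th,\kappa_0$) with $\t(\Lambda)\le m_0$, $\t(\Lambda_1)\le m_1$, $\t(\Lambda_2)\le m_2$.

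When $z\in\V$ we have $\Lambda(0)=\Lambda[z,f]\le 2\alpha\|z\|_\V^2+\alpha\|f\|^2<\infty$, and Lemma~\ref{LEMMAGRNWGEN} applies directly; re-running its short proof with the constant $1$ in $\tilde\Lambda_1$ replaced by $2$ (so as to produce the factor $\e^{-2t}$ in place of $\e^{-t}$, together with $\tfrac{1}{1-\e^{-2}}$ in place of $\tfrac{1}{1-\e^{-1}}$) gives $\Lambda(t)\le C_1\|z\|_\V^2\,\e^{-2t}+C_2$, whence $\|Z(t)\|_\V\le\sqrt{2\Lambda(t)}\le D\|z\|_\V\,\e^{-t}+D$ by $\sqrt{a+b}\le\sqrt a+\sqrt b$. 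When $z$ lies only in $\H^1$, $\Lambda(0)$ may be infinite, so instead I would use the standard smoothing device: for $t\in(0,1]$ the function $\Phi(t)=t\Lambda(t)$ satisfies $\ddt\Phi\le\Lambda_1\Phi+(\Lambda+\Lambda_2)$ because $t\Lambda_2\le\Lambda_2$ there, and integrating from $0$ (licit in the Galerkin scheme, where $\Phi(0)=0$) gives $\Lambda(t)\le\e^{m_1}(m_0+m_2)/t$ on $(0,1]$. In particular $Z(1)\in\V$ with $\|Z(1)\|_\V$ controlled by $\beta,\gamma,\|f\|$ alone, so applying the $z\in\V$ bound from the initial time $t=1$ controls $\|Z(t)\|_\V$ by a constant for $t\ge1$; the two regimes combine into $\|Z(t)\|_\V\le D(1/\sqrt t+1)$ for all $t>0$.

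The heart of the matter is the very first step: realizing that the quartic forcing has to be redistributed as a contribution to $\Lambda_1\Lambda$ rather than lumped into $\Lambda_2$, so that the residual term is merely quadratic and therefore translation bounded under the hypotheses. Everything afterwards is routine: two nearly identical invocations of Lemma~\ref{LEMMAGRNWGEN}, the $t^{-1/2}$ blow-up at the origin being the usual price for starting merely in $\H^1$.
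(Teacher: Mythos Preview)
Your proof is correct and follows essentially the same line as the paper's. The central idea---redistributing the dangerous $\th\|g\|_1^2$ term by absorbing one factor into the coefficient of $\Lambda$ so that only a translation-bounded residue survives in $\Lambda_2$---is exactly what the paper does; the paper simply writes $\|g\|_1^2=\|g\|_1\cdot\|g\|_1$ and bounds one factor via $\|g\|_1\le\gamma+2\gamma\Lambda$, keeping $\|g\|_1$ itself (rather than your $1+y$) in $\Lambda_1$ and $\Lambda_2$, which is cosmetically different but equivalent. For the smoothing step when $z\notin\V$, the paper uses the single cutoff $\tilde\Lambda(t)=\tfrac{t}{1+t}\Lambda(t)$ and reapplies Lemma~\ref{LEMMAGRNWGEN} once, whereas you split into $t\le1$ (with $\Phi=t\Lambda$) and $t\ge1$ (restarting from $Z(1)\in\V$); both are standard and yield the same $1/\sqrt t$ blow-up. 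Your remark about rerunning the Gronwall argument with constant $2$ to recover the stated $\e^{-t}$ rate (rather than $\e^{-t/2}$) is a nice point the paper glosses over.
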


\begin{proof} From \eqref{QLE}, it is readily seen that
$$\|g\|_1\leq \gamma+2\gamma\Lambda.$$
Thus, defining
\begin{align*}
\Lambda_1(t)&=\mu(t)+2\th\gamma\|g(t)\|_1,\\
\Lambda_2(t)&=\th F(u;t)+\th\|f\|^2+2\th\gamma\|g(t)\|_1,
\end{align*}
inequality \eqref{KEYIN} turns into
$$
\frac{\d}{\d t} \Lambda \leq \Lambda\Lambda_1+\Lambda_2.
$$
Using again \eqref{QLE}, and recalling \eqref{zippo}, we learn that
$$\t(\Lambda)+\t(\Lambda_1)+\t(\Lambda_2)\leq C,$$
for some $C>0$ depending (besides on $\kappa_0$) only on
$\beta,\gamma,\|f\|$. Hence, Lemma~\ref{LEMMAGRNWGEN} together with
a further application of \eqref{QLE} entail the second assertion of
the lemma. If $z\not\in\V$, we apply a standard trick: we set
$$\tilde\Lambda(t)=\frac{t}{1+t}\Lambda(t),$$
which satisfies
$$
\frac{\d}{\d t} \tilde\Lambda \leq
\tilde\Lambda\Lambda_1+\tilde\Lambda_2,
$$
where $\tilde\Lambda_2(t)=\Lambda(t)+\Lambda_2(t)$. Note that
$$\t(\tilde\Lambda)+\t(\tilde\Lambda_2)\leq 2C.$$
As in the previous case, the first assertion follows from
Lemma~\ref{LEMMAGRNWGEN} and \eqref{QLE}.
\end{proof}

\begin{lemma}
\label{QWYM2} Suppose that $z\in\V$, $f=0$ and
$$\|g(t)\|_1\leq k\|Z(t)\|_\V,$$
for some $k\geq 0$. Then, there are $D_1>0$ and $D_2=D_2(k)>0$ such
that
$$\|Z(t)\|_\V\leq D_1\|z\|_\V\,\e^{D_2 t}.
$$
\end{lemma}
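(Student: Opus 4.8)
The plan is to feed the energy functional $\Lambda(t)=\Lambda[Z(t),0]$ into the differential inequality of Lemma~\ref{QWYM} and then invoke the classical \emph{integral} Gronwall lemma: here we only need a crude exponential-in-time bound, not a dissipative one, so there is no need for the translation-bounded machinery of Lemma~\ref{LEMMAGRNWGEN}. Since $f=0$, Lemma~\ref{QWYM} gives, a.e.\ in $\R^+$ (within the approximation scheme),
$$\frac{\d}{\d t}\Lambda(t)\leq \mu(t)\Lambda(t)+\th F(t;u)+\th\|g(t)\|_1^2,$$
while \eqref{QLE} with $f=0$ reads $\tfrac12\|Z\|_\V^2\leq\Lambda\leq 2\alpha\|Z\|_\V^2$. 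In particular $\|u(t)\|_2^2\leq\|Z(t)\|_\V^2\leq 2\Lambda(t)$, and the hypothesis on $g$ yields $\th\|g(t)\|_1^2\leq \th k^2\|Z(t)\|_\V^2\leq 2\th k^2\Lambda(t)$.

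Next I would absorb the memory term into $\Lambda$ itself. Inserting $\|u(s)\|_2^2\leq 2\Lambda(s)$ into the definition of $F$ gives $F(t;u)\leq 2\int_0^t\mu(t-s)\Lambda(s)\,\d s$, whence
$$\frac{\d}{\d t}\Lambda(t)\leq \big(\mu(t)+2\th k^2\big)\Lambda(t)+2\th\int_0^t\mu(t-s)\Lambda(s)\,\d s.$$
Integrating over $[0,t]$ and exchanging the order of integration in the double integral (exactly as in Remark~\ref{remQUIM}) produces $\int_0^t\!\!\int_0^s\mu(s-\sigma)\Lambda(\sigma)\,\d\sigma\,\d s\leq\kappa_0\int_0^t\Lambda(\sigma)\,\d\sigma$, since $\int_0^{t-\sigma}\mu\leq\kappa_0$. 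Therefore
$$\Lambda(t)\leq\Lambda(0)+\int_0^t\big(\mu(s)+2\th k^2+2\th\kappa_0\big)\Lambda(s)\,\d s,$$
and the Gronwall lemma, together with $\int_0^t\mu\leq\kappa_0$, gives $\Lambda(t)\leq\Lambda(0)\,\e^{\kappa_0}\,\e^{2\th(k^2+\kappa_0)t}$.

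Finally, translating back through \eqref{QLE} — i.e.\ $\|Z(t)\|_\V^2\leq 2\Lambda(t)$ and $\Lambda(0)=\Lambda[z,0]\leq 2\alpha\|z\|_\V^2$ — yields
$$\|Z(t)\|_\V^2\leq 4\alpha\,\e^{\kappa_0}\,\|z\|_\V^2\,\e^{2\th(k^2+\kappa_0)t},$$
which is the claim with $D_1=2\sqrt{\alpha\,\e^{\kappa_0}}$ and $D_2=D_2(k)=\th(k^2+\kappa_0)$. The only point requiring a little care is the handling of $F(t;u)$: one must resist estimating it pointwise and instead fold it, via Fubini, into the running integral of $\Lambda$, so that the mere summability $\int_0^\infty\mu=\kappa_0$ (and no smallness condition) closes the Gronwall loop; everything else is routine.
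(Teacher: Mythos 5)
Your proposal is correct and follows essentially the same route as the paper: plug $f=0$ and the hypothesis on $g$ into the differential inequality of Lemma~\ref{QWYM}, integrate in time, fold the memory term $F$ into $\int_0^t\Lambda$ via Fubini using only $\int_0^\infty\mu=\kappa_0$, and close with the integral Gronwall lemma before translating back through \eqref{QLE}. The constants differ from the paper's only by immaterial choices (the paper carries an extra $\alpha$ in $D_2$), and your observation that Lemma~\ref{LEMMAGRNWGEN} is not needed here matches what the paper actually does.
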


\begin{proof}
Under these assumptions, \eqref{QLE} and \eqref{KEYIN} become
$$
\textstyle\frac12\|Z\|_\V^2\leq \Lambda\leq 2\alpha\|Z\|_\V^2
$$
and
$$
\frac{\d}{\d t} \Lambda \leq (\mu+2\th\alpha k^2)\Lambda+\th F.
$$
Moreover, exchanging the order of integration,
$$\int_0^{t} F(\tau)\,\d\tau\leq \kappa_0\int_0^{t}\|u(\tau)\|_2^2\,\d\tau
\leq 2\kappa_0\int_0^{t} \Lambda(\tau)\,\d\tau.
$$
Hence, integrating the differential inequality on $(0,t)$, we arrive
at
$$
\Lambda (t) \leq \Lambda(0) + \int_0^t \big[\mu(\tau) + 2\th(\alpha
k^2 + \kappa_0) \Lambda (\tau)\big] \, \d \tau.
$$
Making use of the integral Gronwall lemma,
$$
\|Z(t)\|^2_\V \leq 2\Lambda (t) \leq  2\Lambda(0) \e^{\kappa_0}
\e^{2\th(\alpha k^2 + \kappa_0) t}  \leq 4\alpha \e^{\kappa_0}
\|z\|^2_{\V} \e^{2\th(\alpha k^2 + \kappa_0) t},  $$ and the result
follows by choosing $D_1= 4\alpha \e^{\kappa_0}$ and
$D_2=2\th(\alpha k^2 + \kappa_0)$.
\end{proof}

\section{The Semigroup on $\V$}

We begin with a suitable regularization property for the solutions
departing from $\H^1$.

\begin{proposition}
\label{propREG} Let  $z\in\Ball_{\H^1}(R)$. Then, for every $t>0$,
$S(t)z\in\V$ and the estimate
$$\|S(t)z\|_\V\leq \II(R) \Big(\frac{1}{\sqrt{t}}+1\Big)
$$
holds. If in addition $z\in \V$,
$$\|S(t)z\|_\V\leq \II(R) \|z\|_\V\,\e^{-t}+\II(R).
$$
\end{proposition}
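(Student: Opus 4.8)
The plan is to view $S(t)z=(u(t),\eta^t)$ as the solution of the auxiliary problem~\eqref{SYSLEMMA} corresponding to the choice $g(t)=-\varphi(u(t))$, and then simply to verify the two hypotheses of Lemma~\ref{QWYM1}. Since a priori we only know $S(t)z\in\H^1$, whereas the functional $\Lambda[Z,f]$ and the estimates of Section~5 require $Z\in\V$, all the computations below are understood within the Galerkin approximation scheme announced in Section~2, in which $(u(t),\eta^t)\in\V$ for every $t$; the membership $S(t)z\in\V$, together with the quantitative bounds, is recovered at the end by weak lower semicontinuity of $\|\cdot\|_\V$, uniformly with respect to the approximation parameter.

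For the first hypothesis, I would exploit~\eqref{palmiro}: for $z\in\Ball_{\H^1}(R)$ it gives $\|S(t)z\|_{\H^1}^2\leq\II(R)$ for all $t\geq 0$, hence in particular $\sup_{t\geq0}\|\eta^t\|_{\M^2}^2\leq\II(R)$, as well as $\t(\|u\|_2^2)\leq\II(R)$. Because $\|Z\|_\V^2=\|u\|_2^2+\|\eta\|_{\M^2}^2$, adding these two facts yields $\t(\|Z\|_\V^2)\leq\beta$ with $\beta=\II(R)$, which is exactly the first assumption of Lemma~\ref{QWYM1}.

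The heart of the matter is the second hypothesis, namely $\|g(t)\|_1=\|\varphi(u(t))\|_1\leq\gamma\big(1+\|Z(t)\|_\V^2\big)$. Integrating~\eqref{critical} twice and using $\varphi(0)=0$ gives $|\varphi'(u)|\leq c(1+|u|^4)$, so that $\|\varphi(u)\|_1=\|\varphi'(u)\nabla u\|_{L^2}\leq c\|u\|_1+c\||u|^4\nabla u\|_{L^2}$. The naive bound $\||u|^4\nabla u\|_{L^2}\leq c\|u\|_{L^\infty}^4\|\nabla u\|_{L^2}\leq c\|u\|_2^5$ is quintic in $\|Z\|_\V$ and therefore useless; the decisive point — and the precise place where the critical exponent $p=3$ is tamed — is to distribute the powers between the $H^1$ and the $H^2$ norm. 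In $\R^3$, H\"older with $\tfrac12=\tfrac{4}{12}+\tfrac16$ gives $\||u|^4\nabla u\|_{L^2}\leq\|u\|_{L^{12}}^4\|\nabla u\|_{L^6}$; then $\|\nabla u\|_{L^6}\leq c\|u\|_2$, while $H^{5/4}\hookrightarrow L^{12}$ combined with the interpolation inequality $\|u\|_{H^{5/4}}\leq c\|u\|_1^{3/4}\|u\|_2^{1/4}$ gives $\|u\|_{L^{12}}^4\leq c\|u\|_1^3\|u\|_2$. Hence $\||u|^4\nabla u\|_{L^2}\leq c\|u\|_1^3\|u\|_2^2$, and invoking once more the $\H^1$-bound $\|u(t)\|_1\leq\II(R)$ from~\eqref{palmiro} we arrive at $\|\varphi(u(t))\|_1\leq\II(R)\big(1+\|u(t)\|_2^2\big)\leq\II(R)\big(1+\|Z(t)\|_\V^2\big)$, i.e. the hypothesis holds with $\gamma=\II(R)$.

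With $\beta$ and $\gamma$ as above and $\|f\|$ fixed, Lemma~\ref{QWYM1} applies verbatim: it yields $\|S(t)z\|_\V=\|Z(t)\|_\V\leq D\big(\tfrac1{\sqrt t}+1\big)$ with $D=D(\beta,\gamma,\|f\|)=\II(R)$, which is the first assertion and, in particular, shows $S(t)z\in\V$ for every $t>0$; and, when $z\in\V$, the improved bound $\|S(t)z\|_\V\leq D\|z\|_\V\e^{-t}+D$, which is the second assertion. The only genuine obstacle is the nonlinear estimate of the preceding paragraph: one must carefully use the boundedness of $u(t)$ in $H^1$ already available from~\eqref{palmiro} so as to keep the dependence on $\|Z(t)\|_\V$ merely quadratic, rather than of the fifth power, which the critical growth would naively force.
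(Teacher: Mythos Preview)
Your proof is correct and follows essentially the same approach as the paper: both verify the hypotheses of Lemma~\ref{QWYM1} with $g=-\varphi(u)$, using~\eqref{palmiro} for the translation-boundedness of $\|Z\|_\V^2$ and a suitable interpolation for the nonlinear term. The only difference is in the derivation of the key estimate $\|\varphi(u)\|_1\leq\II(R)(1+\|u\|_2^2)$: the paper uses the Agmon inequality $\|u\|_{L^\infty}^2\leq c_\Omega\|u\|_1\|u\|_2$ directly, obtaining $\|\varphi'(u)\|_{L^\infty}\|u\|_1\leq c(1+\|u\|_1^2\|u\|_2^2)\|u\|_1$, whereas you go through $L^{12}$ and the interpolation $\|u\|_{H^{5/4}}\leq c\|u\|_1^{3/4}\|u\|_2^{1/4}$; both routes yield the identical bound $c\|u\|_1^3\|u\|_2^2$ for the critical term.
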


\begin{proof}
We know from \eqref{palmiro} that the solution $Z(t)=S(t)z$ fulfills
$$\t\big(\|Z\|_\V^2\big)\leq \II(R),$$
whereas \eqref{critical}, \eqref{palmiro} and the Agmon inequality
\begin{equation}
\label{AGMON} \|u\|_{L^\infty(\Omega)}^2\leq c_\Omega\|u\|_1\|u\|_2
\end{equation}
entail
$$
\|\varphi(u)\|_1=\|\varphi'(u)\nabla u\|\leq
\|\varphi'(u)\|_{L^\infty(\Omega)}\|u\|_1 \leq
\II(R)\big(1+\|z(t)\|_\V^2\big).
$$
Hence, Lemma~\ref{QWYM1} with $g=-\varphi(u)$ applies.
\end{proof}

\begin{corollary}
\label{corREG} There exists $R_\V>0$ such that the set
$$\BB_\V:=\Ball_\V(R_\V)
$$
has the following property: for every $R>0$ there is a time
$t_\V=t_\V(R)>0$ such that
$$S(t)\Ball_{\H^1}(R)\subset\BB_\V,\quad \forall t\geq t_\V.$$
\end{corollary}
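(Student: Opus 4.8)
The plan is to upgrade the $\H^1$-absorbing set $\BB_1=\Ball_{\H^1}(R_1)$ (recalled in Section~\ref{SSemi}) to a $\V$-absorbing set by feeding it into the smoothing estimate of Proposition~\ref{propREG}, being careful to pass \emph{through} the fixed ball $\BB_1$ so that the resulting radius $R_\V$ and the relaxation time are structural, with only the entry time retaining the dependence on the size $R$ of the initial data.

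First I would record the elementary embedding bound $\|z\|_{\H^1}\leq c\,\|z\|_\V$ for every $z\in\V$, with $c=\max\{1,\lambda_1^{-1/2}\}$: this follows from $\|u\|_1\leq\lambda_1^{-1/2}\|u\|_2$ together with the fact that $\H^1=H^1\times\M^2$ and $\V=H^2\times\M^2$ share the same memory component. It lets me convert a ball of $\V$ into a ball of $\H^1$ at the price of a structural constant, which is what is needed to apply the $\H^1$-indexed estimates of Proposition~\ref{propREG} to data produced in $\V$.

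Next, applying the first estimate of Proposition~\ref{propREG} to the fixed ball $\BB_1$ and evaluating at $t=1$ yields $\|S(1)w\|_\V\leq 2\,\II(R_1)=:\rho$ for every $w\in\BB_1$, hence $S(1)\BB_1\subset\Ball_\V(\rho)$ with $\rho$ structural. Then, for $z'\in\Ball_\V(\rho)\subset\Ball_{\H^1}(c\rho)$, the second estimate of Proposition~\ref{propREG} gives $\|S(\sigma)z'\|_\V\leq\II(c\rho)\,\rho\,\e^{-\sigma}+\II(c\rho)$; choosing $T_0>0$ (depending only on $\rho$) so large that $\rho\,\e^{-T_0}\leq 1$, this bound becomes $\|S(\sigma)z'\|_\V\leq 2\,\II(c\rho)=:R_\V$ for all $\sigma\geq T_0$. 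Both $R_\V$ and $T_0$ are structural, which fixes the candidate $\BB_\V=\Ball_\V(R_\V)$.

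Finally I would assemble the pieces. Given $R>0$ and $z\in\Ball_{\H^1}(R)$, the absorbing property of $\BB_1$ provides $t_1=t_1(R)$ with $w:=S(t_1)z\in\BB_1$; by the semigroup property, $S(t)z=S(\sigma)\big(S(1)w\big)$ with $\sigma=t-t_1-1$, and since $S(1)w\in\Ball_\V(\rho)$ the previous step yields $S(t)z\in\BB_\V$ for all $t\geq t_1(R)+1+T_0=:t_\V(R)$. There is no genuine obstacle here — this is the routine ``smoothing estimate turns the $\H^1$-absorbing set into a $\V$-absorbing set'' argument; the only point requiring care is to enter $\BB_1$ first and invoke Proposition~\ref{propREG} only afterwards, so that $R_\V$ and $T_0$ stay independent of $R$ and all the $R$-dependence is confined to $t_\V(R)$.
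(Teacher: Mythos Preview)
Your proof is correct and follows the same strategy as the paper: enter the $\H^1$-absorbing ball $\BB_1$ first, then apply Proposition~\ref{propREG}. The paper, however, is more direct: since the first estimate of Proposition~\ref{propREG} already gives $\|S(t)z\|_\V\leq\II(R_1)(t^{-1/2}+1)\leq 2\II(R_1)$ for every $t\geq 1$ and $z\in\BB_1$, one sets $R_\V=2\II(R_1)$ and $t_\V=t_1(R)+1$ immediately, without your detour through the second (exponential-decay) estimate and the auxiliary radius $\rho$ and time $T_0$.
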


\begin{proof}
Let $z\in \BB_1$. According to Proposition~\ref{propREG},
$$\|S(t)z\|_\V\leq \II(R_1)\Big(\frac{1}{\sqrt{t}}+1\Big).
$$
Thus, setting $R_\V=2\II(R_1)$, the inclusion
$S(t)\BB_1\subset\BB_\V$ holds for every $t\geq 1$. Since $\BB_1$ is
absorbing in $\H^1$, for every $R>0$ there exists $t_1=t_1(R)$ such
that $S(t)\Ball_{\H^1}(R)\subset\BB_1$ whenever $t\geq t_1$. We
conclude that
$$S(t)\Ball_{\H^1}(R)\subset S(t-t_1)\BB_1\subset\BB_\V,\quad \forall
t\geq t_\V,$$ with $t_\V=t_1+1$.
\end{proof}

In particular, Proposition~\ref{propREG} tells that $S(t)$ is a
semigroup on $\V$, which, by Corollary~\ref{corREG}, possesses the
absorbing set $\BB_\V$. In fact, $S(t)$ is a strongly continuous
semigroup, as the next proposition shows.

\begin{proposition}
\label{propSTR1} For $\imath=1,2$, let $z_\imath\in \Ball_{\V}(R)$.
Then, we have the continuous dependence estimate
$$\|S(t)z_1-S(t)z_2\|_\V\leq D_1\|z_1-z_2\|_\V
\,\e^{\II(R) t}.
$$
\end{proposition}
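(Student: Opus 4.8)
The plan is to identify the difference of two solutions as a solution of the auxiliary problem~\eqref{SYSLEMMA} with an appropriate forcing term, and then invoke Lemma~\ref{QWYM2}. Write $(u_\imath(t),\eta_\imath^t)=S(t)z_\imath$ for $\imath=1,2$, and set $\bar u=u_1-u_2$, $\bar\eta=\eta_1-\eta_2$, $\bar Z=(\bar u,\bar\eta)$. Subtracting the two copies of~\eqref{SYS}, the pair $\bar Z$ solves~\eqref{SYSLEMMA} with $f=0$, initial datum $\bar Z(0)=z_1-z_2\in\V$, and
$$g(t)=-\big(\varphi(u_1(t))-\varphi(u_2(t))\big).$$
Hence it suffices to verify the hypothesis $\|g(t)\|_1\leq k\|\bar Z(t)\|_\V$ of Lemma~\ref{QWYM2} for some $k=k(R)$; the lemma then yields $\|\bar Z(t)\|_\V\leq D_1\|z_1-z_2\|_\V\e^{D_2(k)t}$, and since $D_2(k(R))$ is an increasing function of $R$ it may be renamed $\II(R)$, which gives exactly the asserted estimate.

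First I would collect uniform-in-time bounds for the two individual trajectories. Since $z_\imath\in\Ball_\V(R)\subset\V$, the second estimate of Proposition~\ref{propREG} gives $\|u_\imath(t)\|_2\leq\|S(t)z_\imath\|_\V\leq\II(R)$ for every $t\geq0$; combined with $\|u_\imath\|_1\leq\lambda_1^{-1/2}\|u_\imath\|_2$ and the Agmon inequality~\eqref{AGMON}, this yields $\|u_\imath(t)\|_{L^\infty(\Omega)}\leq\II(R)$. In turn, \eqref{critical} (for $\varphi''$) together with one integration using $\varphi\in\C^2(\R)$ (for $\varphi'$) propagate these to $\|\varphi'(u_\imath(t))\|_{L^\infty(\Omega)}\leq\II(R)$ and $\|\varphi''(u_\imath(t))\|_{L^\infty(\Omega)}\leq\II(R)$, uniformly in $t\geq0$.

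Next I would bound $g$ in $H^1$. Since $\|g\|_1=\|\nabla(\varphi(u_1)-\varphi(u_2))\|$, I would use the splitting
$$\nabla\big(\varphi(u_1)-\varphi(u_2)\big)=\varphi'(u_1)\nabla\bar u+\big(\varphi'(u_1)-\varphi'(u_2)\big)\nabla u_2.$$
The first term is at most $\|\varphi'(u_1)\|_{L^\infty(\Omega)}\|\bar u\|_1\leq\II(R)\|\bar Z\|_\V$, using $\|\bar u\|_1\leq\lambda_1^{-1/2}\|\bar u\|_2\leq\lambda_1^{-1/2}\|\bar Z\|_\V$. For the second term, the mean value theorem and the $L^\infty$ bound on $\varphi''$ give $\|\varphi'(u_1)-\varphi'(u_2)\|_{L^\infty(\Omega)}\leq\II(R)\|\bar u\|_{L^\infty(\Omega)}$, and a further application of~\eqref{AGMON} with $\|\bar u\|_1\|\bar u\|_2\leq\lambda_1^{-1/2}\|\bar Z\|_\V^2$ bounds this by $\II(R)\|\bar Z\|_\V$; multiplying by $\|\nabla u_2\|=\|u_2\|_1\leq\II(R)$ yields the same control. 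Altogether $\|g(t)\|_1\leq\II(R)\|\bar Z(t)\|_\V$, so Lemma~\ref{QWYM2} applies with $k=\II(R)$ and the proof is complete.

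The point requiring care is not a genuine obstacle here: the critical growth $p=3$ in~\eqref{critical}, which would block a Lipschitz estimate for $\varphi$ in the weak-energy topology, is neutralized by the instantaneous $\V$-regularization of Proposition~\ref{propREG}, which places $u_\imath$ into $L^\infty(\Omega)$ uniformly in time and renders every nonlinear contribution subcritical. Thus the real content sits in Proposition~\ref{propREG} and Lemma~\ref{QWYM2}; granting those, the estimate above is routine, the only bookkeeping being to keep all constants in the form $\II(R)$.
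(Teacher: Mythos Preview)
Your proof is correct and follows essentially the same route as the paper: you cast the difference as a solution of~\eqref{SYSLEMMA} with $f=0$ and $g=-(\varphi(u_1)-\varphi(u_2))$, use Proposition~\ref{propREG} together with~\eqref{critical} and the Agmon inequality to obtain $\|g\|_1\leq\II(R)\|\bar u\|_2\leq\II(R)\|\bar Z\|_\V$, and then invoke Lemma~\ref{QWYM2}. The only difference is cosmetic---you spell out the $H^1$-Lipschitz estimate for $\varphi$ via the splitting of $\nabla(\varphi(u_1)-\varphi(u_2))$, whereas the paper simply asserts it as immediate.
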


\begin{proof}
Calling $(u_\imath(t),\eta^t_\imath)=S(t)z_\imath$, the difference
$(\bar u(t),\bar\eta^t)=S(t)z_1-S(t)z_2$ fulfills the problem
$$
\begin{cases}
\displaystyle \pt \bar u+A \bar u + \int_0^\infty
\mu(s)A\bar\eta(s)\,\d s
=\varphi(u_2)-\varphi(u_1),\\
\pt \bar\eta=T\bar\eta+ \bar u, \\
\noalign{\vskip1.5mm} \bar z(0)=z_1-z_2.
\end{cases}
$$
Due to Proposition~\ref{propREG}, $\|u_\imath\|_2\leq\II(R)$.
Exploiting \eqref{critical} and the Agmon inequality~\eqref{AGMON},
it is then immediate to see that
 $$\|\varphi(u_2)-\varphi(u_1)\|_1\leq \II(R)\|\bar u\|_2,
$$
and the claim is a consequence of Lemma~\ref{QWYM2} with $f=0$ and
$g=\varphi(u_2)-\varphi(u_1)$.
\end{proof}

\begin{proposition}
\label{propSTR2} For every fixed $z\in\V$,
$$t\mapsto S(t)z\in\C([0,\infty),\V).$$
\end{proposition}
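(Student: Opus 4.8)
The plan is to deduce full continuity from right-continuity at $t=0$, and to obtain the latter by upgrading the already known $\H^1$-strong continuity with a one-sided ``no upward jump'' bound for the functional $\Lambda$ of Lemma~\ref{QWYM}. \emph{Reduction to $t=0$.} Suppose it is proved that $S(t)z\to z$ in $\V$ as $t\to0^+$ for every $z\in\V$. By Proposition~\ref{propREG}, $S(t_0)z\in\V$ for every $t_0>0$, so writing $S(t_0+h)z=S(h)\bigl(S(t_0)z\bigr)$ and applying right-continuity at $0$ to the datum $S(t_0)z$ gives right-continuity of $t\mapsto S(t)z$ at $t_0$. For left-continuity at $t_0>0$ take $0<h<t_0$; since both $z$ and $S(h)z$ lie in $\Ball_\V(R)$ with $R=\II(\|z\|_{\H^1})(\|z\|_\V+1)$, uniformly in $h$ (Proposition~\ref{propREG}), Proposition~\ref{propSTR1} yields
$$\|S(t_0-h)z-S(t_0)z\|_\V=\|S(t_0-h)z-S(t_0-h)\bigl(S(h)z\bigr)\|_\V\leq D_1\,\e^{\II(R)t_0}\,\|z-S(h)z\|_\V,$$
and the last quantity tends to $0$ as $h\to0^+$, again by right-continuity at $0$.

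\emph{Right-continuity at $0$.} Fix $z=(u_0,\eta_0)\in\V$ and put $(u(t),\eta^t)=S(t)z$. As $\V\subset\H^1$ and $S(t)$ is strongly continuous on $\H^1$, we already have $\eta^t\to\eta_0$ in $\M^2$ and $u(t)\to u_0$ in $H^1$, so only the upgrade $u(t)\to u_0$ in $H^2$ is missing. By Proposition~\ref{propREG} the family $\{u(t)\}_{t\geq0}$ is bounded in $H^2$, and since any weak $H^2$-cluster point of $u(t)$ as $t\to0^+$ must coincide with the $H^1$-limit $u_0$, reflexivity gives $u(t)\rightharpoonup u_0$ weakly in $H^2$; hence, by uniform convexity, it suffices to show $\limsup_{t\to0^+}\|u(t)\|_2\leq\|u_0\|_2$. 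To this end apply Lemma~\ref{QWYM} with $g=-\varphi(u)$ (legitimate as in the proof of Proposition~\ref{propREG}): since $\|S(t)z\|_\V$ is uniformly bounded, the right-hand side of \eqref{KEYIN} is dominated by $C\mu(t)+C$, so integrating on $(0,t)$ and using $\int_0^t\mu\to0$ we obtain $\limsup_{t\to0^+}\Lambda(t)\leq\Lambda(0)$. In
$$\Lambda(t)=\|u(t)\|_2^2+\alpha\|S(t)z\|_{\H^1}^2+2\l\eta^t,u(t)\r_{\M^2}-2\l f,Au(t)\r+8\|f\|^2$$
all terms other than $\|u(t)\|_2^2$ converge as $t\to0^+$: the $\H^1$-term by $\H^1$-strong continuity; the term $-2\l f,Au(t)\r$ because $f$ is fixed and $Au(t)\rightharpoonup Au_0$ weakly in $H$; and $2\l\eta^t,u(t)\r_{\M^2}=2\l\int_0^\infty\mu(s)A\eta^t(s)\,\d s,\,Au(t)\r$ because $\int_0^\infty\mu(s)A\eta^t(s)\,\d s\to\int_0^\infty\mu(s)A\eta_0(s)\,\d s$ strongly in $H$ (a consequence of $\eta^t\to\eta_0$ in $\M^2$) tested against the weakly convergent $Au(t)$. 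Therefore $\limsup\Lambda(t)\leq\Lambda(0)$ forces $\limsup_{t\to0^+}\|u(t)\|_2^2\leq\|u_0\|_2^2$, and the proof is complete.

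\emph{Main obstacle.} Everything non-routine sits in the last step: one must rule out an upward jump of $\|u(t)\|_2$ at $t=0$, and a naive energy identity obtained by multiplying the equation by $A\pt u$ is inconclusive because of the wrong-sign memory pairing $\l T\eta,u\r_{\M^2}$ together with the critical growth of $\varphi$. It is precisely the tailor-made functional $\Lambda$ of Lemma~\ref{QWYM} — whose only genuinely growing driving term is $\mu(t)\Lambda(t)$, harmless since $\mu$ is summable — combined with the already established $\H^1$-strong continuity (which furnishes both the convergence of all the auxiliary terms inside $\Lambda$ and the weak $H^2$-convergence of $u(t)$) that makes the $\limsup$ estimate, and hence the whole statement, go through.
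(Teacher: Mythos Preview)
Your proof is correct and takes a genuinely different route from the paper. The paper's argument is a short density argument: pick a regular sequence $z_n\to z$ in $\V$ for which $t\mapsto S(t)z_n$ is already known to lie in $\C([0,\infty),\V)$, and use the Lipschitz estimate of Proposition~\ref{propSTR1} to show that $\{S(\cdot)z_n\}$ is Cauchy in $\C([0,\tau],\V)$ for every $\tau>0$, hence its limit $S(\cdot)z$ is continuous. By contrast, you work directly at $t=0$, combining weak $H^2$-convergence of $u(t)$ (from the uniform $\V$-bound of Proposition~\ref{propREG}) with a ``no upward jump'' estimate for $\|u(t)\|_2$ extracted from the functional $\Lambda$ of Lemma~\ref{QWYM}, and then propagate continuity via the semigroup property and Proposition~\ref{propSTR1}. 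The paper's approach is quicker and more standard but tacitly relies on a dense class of data with $\V$-continuous trajectories (left implicit, presumably furnished by the Galerkin scheme); your approach is longer and leans more heavily on the Section~5 machinery, but is self-contained given the tools already developed and makes transparent \emph{why} no jump of $\|u(t)\|_2$ can occur at $t=0$.
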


\begin{proof}
Let $\tau>0$ be fixed. Given $z\in\V$, choose a regular sequence
$z_n\to z$ in $\V$, such that $t\mapsto
S(t)z_n\in\C([0,\infty),\V)$. For every $n,m\in\N$,
Proposition~\ref{propSTR1} provides the estimate
$$\sup_{t\in[0,\tau]}\|S(t)z_n-S(t)z_m\|_\V\leq C\|z_n-z_m\|_\V,
$$
for some $C>0$ depending on $\tau$ and on the $\V$-bound of $z_n$.
Therefore, $t\mapsto S(t)z_n$ is a Cauchy sequence in
$\C([0,\tau],\V)$. Accordingly, its limit $t\mapsto S(t)z$ belongs
to $\C([0,\tau],\V)$. Since $\tau>0$ is arbitrary, we are done.
\end{proof}

Finally, we dwell on the linear homogeneous case, that is,
system~\eqref{LINS}. From the previous results, we know that $L(t)$
is a strongly continuous semigroup of linear operators on $\V$. We
prove that $L(t)$ is exponentially stable as well.

\begin{proposition}
\label{propExpStab} The semigroup $L(t)$ satisfies the exponential
decay property
\begin{equation}
\label{DECCO3} \|L(t)\|_{\LL(\V)}\leq M_1 \e^{-\eps_1 t},
\end{equation}
for some $M_1\geq 1$ and $\eps_1>0$.
\end{proposition}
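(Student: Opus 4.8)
The plan is to deduce \eqref{DECCO3} from the decay \eqref{DECCO2} of $L(t)$ on $\H^1$ together with the regularizing estimate of Lemma~\ref{QWYM1} applied to the linear system \eqref{LINS}, which is precisely the case $f=0$, $g=0$ of \eqref{SYSLEMMA}. Fix $z\in\V$ and write $Z(t)=L(t)z=(u(t),\eta^t)$. The memory component is essentially for free: since $\|\eta^t\|_{\M^2}\le\|Z(t)\|_{\H^1}$ and $\|z\|_{\H^1}\le c\,\|z\|_\V$ (by the embedding $H^2\subset H^1$), \eqref{DECCO2} gives $\|\eta^t\|_{\M^2}\le cM\e^{-\eps t}\|z\|_\V$. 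Thus the whole issue is to upgrade the $\H^1$-decay of $\|u(t)\|_1$ to an exponential decay of $\|u(t)\|_2$, the space $\V=H^2\times\M^2$ being ``asymmetric'' (more regular than $\H^1$ in $u$, but only as regular as $\H^1$ in $\eta$), so that \eqref{DECCO2} cannot be invoked directly.

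First I would record the dissipation mechanism. Testing \eqref{LINS} by $Z$ in $\H^1$ and using \eqref{TTT} — exactly the computation leading to \eqref{LINO} with $f=g=0$ — yields $\ddt\|Z\|_{\H^1}^2+\|u\|_2^2\le 0$; hence $t\mapsto\|Z(t)\|_{\H^1}$ is nonincreasing and $\int_t^{t+1}\|u(\tau)\|_2^2\,\d\tau\le\|Z(t)\|_{\H^1}^2$. Combined with $\|\eta^\tau\|_{\M^2}\le\|Z(t)\|_{\H^1}$ for $\tau\ge t$, this gives the translation bound
$$
\sup_{\tau\ge t}\int_\tau^{\tau+1}\|Z(\sigma)\|_\V^2\,\d\sigma\le 2\|Z(t)\|_{\H^1}^2\le 2c^2M^2\e^{-2\eps t}\|z\|_\V^2,
$$
i.e. the $\V$-energy is translation bounded, with a bound that is \emph{exponentially small after time $t$}. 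Taking $t=0$ here, Lemma~\ref{QWYM1} (second assertion, since $z\in\V$) applied to \eqref{LINS} gives $\|L(t)z\|_\V\le D(\|z\|_\V)$ uniformly in $t\ge0$; by linearity of $L(t)$ this upgrades to a uniform operator bound $\sup_{t\ge0}\|L(t)\|_{\LL(\V)}=:C_0<\infty$.

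The decisive step is a time-shift bootstrap. Fix $t>0$, set $n=\lfloor t/2\rfloor$, and restart the trajectory at time $n$: by the semigroup property $Z(n+s)=L(s)Z(n)$, and the displayed estimate with $t$ replaced by $n$ shows that the $\V$-energy of the restarted solution is translation bounded by $\beta_n:=2c^2M^2\e^{-2\eps n}\|z\|_\V^2$. Now I would not invoke Lemma~\ref{QWYM1} as a black box but unfold its short proof in the case $f=g=0$: there $\Lambda_1=\mu$, so the Gronwall constant $m_1=\t(\mu)\le\kappa_0$ carries \emph{no} dependence on $\beta_n$, and reading off Lemma~\ref{LEMMAGRNWGEN} together with \eqref{QLE}, \eqref{zippo}, \eqref{KEYIN} one obtains the refined inequality
$$
\|Z(n+s)\|_\V^2\le 4\alpha\,\e^{\kappa_0}\,\|Z(n)\|_\V^2\,\e^{-s}+C_1\beta_n,\qquad s\ge0,
$$
with $C_1$ a structural constant. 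Inserting $\|Z(n)\|_\V\le C_0\|z\|_\V$ and choosing $s=t-n\ge t/2$, while $n\ge t/2-1$, gives
$$
\|L(t)z\|_\V^2\le 4\alpha\,\e^{\kappa_0}C_0^2\,\e^{-t/2}\|z\|_\V^2+2c^2M^2C_1\,\e^{2\eps}\,\e^{-\eps t}\|z\|_\V^2,
$$
which is \eqref{DECCO3} with $\eps_1=\tfrac14\min\{1,2\eps\}$ and a suitable $M_1$ (chosen large enough also to absorb $t\in[0,2]$, where the uniform bound $C_0$ suffices), after taking the supremum over $\|z\|_\V\le1$.

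The one genuinely delicate point is the bookkeeping in the restart: when Lemma~\ref{QWYM1} is re-applied on $[n,\infty)$ one must be sure that the multiplicative constant in front of $\|Z(n)\|_\V^2\,\e^{-s}$ is \emph{independent} of the (now exponentially small) translation bound $\beta_n$; otherwise the gain would be destroyed. This is exactly what occurs for $f=g=0$, since then $\Lambda_1=\mu$ is $\beta$-free — which is why I would repeat the proof in this special case rather than quote the lemma. Everything else (the dissipation identity, the a priori $\V$-bound, and the final combination of two exponentials) is routine.
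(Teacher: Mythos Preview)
Your argument is correct, but it takes a genuinely different route from the paper. The paper's proof is a two-line application of Datko's theorem: from \eqref{DECCO2} one gets $\int_0^\infty\|\eta^\tau\|_{\M^2}^2\,\d\tau<\infty$, and from the dissipation identity $\ddt\|L(t)z\|_{\H^1}^2+2\|u\|_2^2\le 0$ (the same one you use) one gets $\int_0^\infty\|u(\tau)\|_2^2\,\d\tau<\infty$; hence $\int_0^\infty\|L(\tau)z\|_\V^2\,\d\tau<\infty$ for every $z\in\V$, and Datko's theorem yields \eqref{DECCO3} directly. Your approach instead stays entirely within the auxiliary machinery of Section~5: you extract from the dissipation identity an \emph{exponentially decaying} translation bound on $\|L(\cdot)z\|_\V^2$, then restart at time $n\approx t/2$ and re-run Lemma~\ref{LEMMAGRNWGEN} with $f=g=0$, carefully noting that in this case $\Lambda_1=\mu$ so $m_1\le\kappa_0$ is independent of the (now small) translation bound $\beta_n$, while the additive term scales linearly with $\beta_n$. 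Your use of linearity to normalize $\|z\|_\V=1$ and turn the $\beta$-dependent constant $D$ of Lemma~\ref{QWYM1} into a genuine operator bound $C_0$ is also correct. The trade-off is clear: the paper's proof is much shorter and conceptually cleaner, leaning on a classical semigroup result; yours is self-contained and quantitative (one can in principle read off $M_1,\eps_1$ from structural constants), at the cost of a more delicate bookkeeping that you rightly flag in your final paragraph.
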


\begin{proof}
Let $z\in\V$. By virtue of \eqref{DECCO2} we have that
$$\|\eta^t\|_{\M^2}\leq \|L(t)z\|_{\H^1}\leq M \e^{-\eps t}\|z\|_{\H^1}.$$
Thus,
$$\int_0^\infty \|\eta^\tau\|_{\M^2}^2\,\d\tau<\infty.
$$
On the other hand, multiplying \eqref{LINS} times $(u,\eta)$ in
$\H^1$, and using \eqref{TTT}, we get
$$\frac{\d}{\d t}\|S(t)z\|^2_{\H^1}+2\|u(t)\|_2^2\leq 0.$$
Integrating the inequality, we obtain
$$\int_0^\infty \|u(\tau)\|_2^2\,\d\tau\leq
\textstyle\frac12\|z\|_{\H^1}^2<\infty.$$ We conclude that
$$\int_0^\infty \|L(\tau)z\|_\V^2\,\d\tau<\infty,\quad\forall z\in\V,$$
and the result follows from the celebrated theorem of R.\ Datko
\cite{DAT} (see also \cite{PAZ}).
\end{proof}

\section{Regular Exponentially Attracting Sets}

\subsection{The result}

We show the existence of a compact subset of $\V$ which
exponentially attracts $\BB_\V$, with respect to the Hausdorff
semidistance in $\V$. To this end, we introduce the further
space~\cite{Amnesia}
$$
\W=\big\{\eta \in \M^4\cap\D_2(T): \,\,\Xi[\eta]< \infty\big\},
$$
where
$$\Xi[\eta]=\|T\eta\|_{\M^2}^2
+  \sup_{x \geq 1} \bigg[x\int_{(0,1/x) \cup (x,\infty)}\mu(s)
\|\eta(s)\|^2_{2}\,\d s\bigg].$$ This is a Banach space endowed with
the norm
$$
\|\eta\|_{\W}^2 = \|\eta\|_{\M^4}^2 +\Xi[\eta].
$$
Finally, we define the product space
$$\Z=H^3\times\W\subset\H^3.$$

\begin{remark}
By means of a slight generalization of~\cite[Lemma 5.5]{PZ}, the
embedding $\Z\subset\V$ is compact (this is the reason why $\Z$ is
needed), contrary to the embedding $\H^3\subset\V$, which is clearly
continuous, but never compact. Moreover, closed balls of $\Z$ are
compact in $\V$ (see~\cite{CGGP}).
\end{remark}

\begin{theorem}
Let $z_f$ be given by \eqref{zetaf}. \label{ATT1} There exists
$R_\star>0$ such that
$$\BB:=z_f+\Ball_{\Z}(R_\star)$$
fulfills the following properties:
\begin{itemize}
\item[(i)] There is $t_\star=t_\star(R_\star)>0$ such that
$S(t)\BB\subset\BB$ for every $t\geq t_\star$.
\item[(ii)] The inequality
$$\dist_{\V}(S(t)\BB_\V,\BB)
\leq C_1 \e^{-\eps_1 t}
$$
holds for some $C_1>0$, with $\eps_1$ as in \eqref{DECCO3}.
\end{itemize}
\end{theorem}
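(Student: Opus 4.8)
The plan is to decompose the solution $S(t)z$ for $z\in\BB_\V$ into a linear part that decays exponentially and a nonlinear remainder that is, after a short time, uniformly bounded in the more regular space $\Z$. Concretely, write $S(t)z = z_f + L(t)(z-z_f) + V(t)z$, where $L(t)$ is the linear contraction semigroup of \eqref{LINS} and the remainder $V(t)z =: (v(t),\xi^t)$ solves
\begin{equation*}
\begin{cases}
\pt v + Av + \int_0^\infty \mu(s)A\xi(s)\,\d s = f + g,\\
\pt \xi = T\xi + v,\\
V(0)z = 0,
\end{cases}
\end{equation*}
with forcing $g = -\varphi(u) + (\text{terms cancelling } f \text{ against } z_f)$; by the choice $u_f=\tfrac12 A^{-1}f$ the stationary contribution is arranged so that $g$ reduces to $-\varphi(u)$ up to bounded corrections. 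By Proposition~\ref{propREG} and Corollary~\ref{corREG}, for $z\in\BB_\V$ the orbit $u(t)$ is bounded in $H^2$ uniformly for $t$ in any interval $[t_0,\infty)$ with $t_0>0$, and in fact for all $t\ge 0$ after replacing $\BB_\V$ by $S(t_0)\BB_\V$; so I may assume $\|u(t)\|_2\le \II(R_\V)$ for all $t\ge 0$.

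Next I would establish the regularity gain: $g(t)=-\varphi(u(t))$ is bounded in $H^2$. Since $\varphi\in\C^2$ with \eqref{critical} ($p\le 3$) and $u$ is bounded in $H^2\hookrightarrow L^\infty(\Omega)\cap W^{1,6}$, one gets $\|\varphi(u)\|_2 \le \II(R_\V)$ by differentiating twice and using Agmon \eqref{AGMON} together with Hölder: the worst term $\varphi''(u)|\nabla u|^2$ is controlled by $\|\varphi''(u)\|_\infty \|\nabla u\|_{L^4}^2$, and $\|u\|_{L^\infty}$, $\|\nabla u\|_{L^4}$ are dominated by $\|u\|_2$ via Sobolev in dimension three. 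Thus $g$ is bounded in $H^2$, uniformly in time. I then feed this into the regularity theory for the linear system at one level higher: multiplying the $v$-equation by $A^2\pt v$ and running the same functional-and-Gronwall machinery of Section~5 one level up (i.e. the analogue of Lemma~\ref{QWYM} with $\Lambda$ built from $\|v\|_3^2$, $\|V z\|_{\H^2}^2$, $\l\xi,v\r_{\M^3}$, and the memory productions controlled exactly as in \eqref{VRR} but in $\M^3$), one obtains a bound $\|V(t)z\|_{H^3\times\M^3}\le \II(R_\V)$. To upgrade $\M^3$ to the space $\W$ I would additionally exploit the explicit representation \eqref{REP} of $\xi^t$ together with the bound on $v$ and on $\pt\xi = T\xi+v$: the tail/peak control in $\Xi[\cdot]$ comes from \eqref{NEC2} applied to the representation formula (this is where the generalized kernel hypothesis is used, in the spirit of the cited \cite{Amnesia} and \cite[Lemma~5.5]{PZ}), and $\|T\xi\|_{\M^2}^2\le \|v\|_{\M^2}$-type estimates plus the $\pt\xi$ bound. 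Conclusion: there is $R_\star>0$ with $V(t)z\in \Ball_\Z(R_\star/2)$ for all $t\ge t_0$ and all $z\in\BB_\V$.

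Finally I assemble (i) and (ii). For (ii): $\dist_\V(S(t)z,\, z_f+V(t)z) \le \|L(t)(z-z_f)\|_\V \le M_1 \e^{-\eps_1 t}(\|z\|_\V+\|z_f\|_\V) \le C_1\e^{-\eps_1 t}$ by Proposition~\ref{propExpStab}, uniformly over $z\in\BB_\V$, and $z_f+V(t)z\in\BB:=z_f+\Ball_\Z(R_\star)$ for $t$ large; absorbing the transient $t\in[0,t_0]$ into the constant $C_1$ gives the stated inequality for all $t>0$. For (i): since $\BB_\V$ is absorbing for $S(t)$ in $\V$ (Corollary~\ref{corREG}) and $\BB\subset\BB_\V$ once $R_\star$ is fixed (enlarging $R_\V$ if necessary so that $z_f+\Ball_\Z(R_\star)\subset\Ball_\V(R_\V)$), there is $t_1$ with $S(t)\BB\subset\BB_\V$ for $t\ge t_1$; then for $t\ge t_\star:=t_1+t_0$ one has $S(t)\BB = S(t_0)S(t-t_0)\BB \subset S(t_0)\BB_\V \subset z_f+\Ball_\Z(R_\star/2)\subset\BB$, using the $\Z$-bound from the previous paragraph applied to the absorbed set.

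The main obstacle is the second paragraph's regularity step: pushing the energy/memory estimates from the $\V$-level to the $H^3\times\W$-level while keeping the memory productions under control with only the weak kernel assumption \eqref{NEC2} (rather than \eqref{BAD}). This requires the functional-reconstruction trick (as in Lemma~\ref{LEMMASUPER} referenced in the text) adapted to $\M^3$ and to the $\Xi$-seminorm, and a careful use of \eqref{REP} to bound the $\W$-norm of $\xi^t$; the nonlinear bound $\|\varphi(u)\|_2\le\II(R_\V)$ is the other place where criticality $p=3$ is felt, but it is handled cleanly by Sobolev embeddings in $\R^3$ since $u$ is already known to be bounded in $H^2$.
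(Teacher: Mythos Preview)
Your decomposition $S(t)z=z_f+L(t)(z-z_f)+V(t)z$ is exactly the paper's, and your treatment of (ii) and of the $\Z$-bound on the nonlinear part $V(t)z$ is essentially correct (the paper obtains the $\H^3$-bound on $W=V(\cdot)z$ more directly, by testing \eqref{SYS2} against $W$ in $\H^3$ and adding the reconstruction functional $\Upsilon$, rather than rerunning the Section~5 machinery one level up; and it packages the $\Xi$-control of $\xi^t$ into Lemma~\ref{TECH1}, but your sketch is along the same lines).

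The genuine gap is in your argument for (i). You write $S(t_0)\BB_\V\subset z_f+\Ball_\Z(R_\star/2)$, but this is false: for $z\in\BB_\V$ you have $S(t_0)z=z_f+L(t_0)(z-z_f)+V(t_0)z$, and while $V(t_0)z\in\Ball_\Z(R_\star/2)$, the linear piece $L(t_0)(z-z_f)$ lies only in $\V$, not in $\Z$. Its second component need not belong to $\M^4$ nor have finite $\Xi$-seminorm when $z-z_f$ is merely in $\V$. So $S(t_0)\BB_\V$ is not contained in any $z_f+\Ball_\Z(\varrho)$, and your chain of inclusions breaks.

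What is missing is a $\Z$-estimate on $L(t)(z-z_f)$ \emph{for initial data $z\in z_f+\Ball_\Z(\varrho)$}. This is the paper's estimate \eqref{TRI}: one uses the $\H^3$-decay \eqref{DECCO2} for the $H^3\times\M^4$ part, and then Lemma~\ref{TECH1} (applied to the second component of $L(t)(z-z_f)$, with $u=v$ bounded in $H^2$) to control $\Xi[\psi^t]$. With \eqref{DU} and \eqref{TRI} in hand, one gets $\|S(t)z-z_f\|_\Z\le \II_3(\varrho)\e^{-\eps_2 t}+\II_2(R)+\II_4(R)$ whenever $z\in\Ball_\V(R)\cap[z_f+\Ball_\Z(\varrho)]$. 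The paper then sets $R_\star=2\II_2(R_\V)+2\II_4(R_\V)$, first applies this estimate on $\BB$ itself (with $R=\|\BB\|_\V$, $\varrho=R_\star$) to keep $S(t)\BB$ inside some $z_f+\Ball_\Z(\varrho_\star)$ for all $t\ge 0$, waits for absorption into $\BB_\V$, and only then reapplies the estimate with $R=R_\V$ to land in $z_f+\Ball_\Z(R_\star)$ for large $t$. Without the linear $\Z$-control \eqref{TRI}, positive invariance cannot be closed.
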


Theorem~\ref{ATT1} is a consequence of the next lemma, proved in
Subsection~\ref{subsub}.

\begin{lemma}
\label{LEMMASUPER} Let $\II_\imath(\cdot)$ denote generic increasing
positive functions. For every $z\in\Ball_\V(R)$, the semigroup
$S(t)z$ admits the decomposition
$$S(t)z=z_f+\ell_1(t;z)+\ell_2(t;z),$$
where
\begin{align}
\label{UN}
\|\ell_1(t;z)\|_{\V}&\leq \II_1(R)\e^{-\eps_1 t},\\
\label{DU} \|\ell_2(t;z)\|_{\Z}&\leq \II_2(R).
\end{align}
If in addition $z\in z_f+\Ball_{\Z}(\varrho)$, we have the further
estimate
\begin{equation}
\label{TRI} \|\ell_1(t;z)\|_{\Z}\leq \II_3(\varrho)\e^{-\eps_2
t}+\II_4(R),
\end{equation}
for some $\eps_2>0$.
\end{lemma}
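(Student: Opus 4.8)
The natural strategy is the classical ``decomposition into a decaying linear part plus a bounded regular part'', tailored to the memory setting. Write $S(t)z = z_f + v(t)$ where $v=(w,\xi)$ solves the system obtained from \eqref{SYS} by subtracting the stationary data: since $u_f=\tfrac12 A^{-1}f$ and $\eta_f(s)=u_f s$ satisfy $Au_f+\int_0^\infty\mu(s)A\eta_f(s)\,\d s = Au_f+\kappa_0 Au_f$, wait --- more precisely one checks that $z_f$ (with the factor $\tfrac12$ chosen exactly so that the memory contribution $\int_0^\infty\mu(s)\,s\,\d s\,Au_f=Au_f$ combines with $Au_f$ to give $2Au_f$) is the ``equilibrium with respect to the linear dynamics plus $f$''. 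The remainder $v$ then solves a problem of the form $\pt w+Aw+\int_0^\infty\mu(s)A\xi(s)\,\d s = -\varphi(u)$, $\pt\xi=T\xi+w$, $v(0)=z-z_f\in\Ball_\V(R')$ with $R'=R'(R)$. Now split $v=\ell_1+\ell_2$ further by the usual device: let $\ell_1(t;z)=L(t)(z-z_f)$ solve the \emph{homogeneous} linear system \eqref{LINS}, and let $\ell_2(t;z)=v(t)-\ell_1(t;z)$ solve the linear-in-$(\ell_2)$ system with forcing $g=-\varphi(u)$ and zero initial data.

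For \eqref{UN}: $\ell_1(t;z)=L(t)(z-z_f)$, and by Proposition~\ref{propExpStab} we have $\|\ell_1(t;z)\|_\V\leq M_1\e^{-\eps_1 t}\|z-z_f\|_\V\leq \II_1(R)\e^{-\eps_1 t}$. This is the easy part. For \eqref{DU}: $\ell_2$ solves $\pt\ell_2^{(1)}+A\ell_2^{(1)}+\int_0^\infty\mu(s)A\ell_2^{(2)}(s)\,\d s=-\varphi(u)$, $\pt\ell_2^{(2)}=T\ell_2^{(2)}+\ell_2^{(1)}$ with $\ell_2(0)=0$. The forcing $-\varphi(u)$ must be estimated in a norm \emph{stronger} than $H^1$ in order to land $\ell_2$ in $\Z=H^3\times\W$. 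By Proposition~\ref{propREG}, $\|u(t)\|_2\leq\II(R)$ for $t\geq$ any fixed positive time (and for all $t>0$ with the $1/\sqrt t$ blow-up absorbed after a short initial layer); then using \eqref{critical}, the Agmon inequality \eqref{AGMON}, and interpolation / Sobolev embedding in dimension three, one bounds $\|\varphi(u)\|_2$ (or at least $\|A^{1/2}\varphi(u)\|_1$-type quantities) by $\II(R)$ --- here $p\le 3$ is used, but critically one needs the \emph{regularized} bound $\|u\|_2\le\II(R)$ from Proposition~\ref{propREG}, which is precisely what makes the nonlinearity ``subcritical in all respects'' as announced in the introduction. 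With $g=-\varphi(u)\in L^\infty(\R^+;H^2)$ of norm $\le\II(R)$, one runs the energy estimates for the linear memory system at the $H^3\times\W$ level: standard multiplier arguments (testing the $\ell_2^{(1)}$-equation with $A^2\pt\ell_2^{(1)}$ and with $A^2\ell_2^{(1)}$, plus the memory-space multiplier that controls $\Xi[\ell_2^{(2)}]$, combined as in Lemma~\ref{QWYM1}/\ref{QWYM2} but one derivative higher), yield a translation-bounded-forcing Gronwall situation (Lemma~\ref{LEMMAGRNWGEN}) giving $\|\ell_2(t;z)\|_\Z\le\II_2(R)$ uniformly in $t$.

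For the conditional estimate \eqref{TRI}: when $z\in z_f+\Ball_\Z(\varrho)$, the initial datum $z-z_f$ of the $v$-problem lies in $\Ball_\Z(\varrho)$, hence so does $\ell_1(0;z)=z-z_f$. Now estimate $\ell_1=L(t)(z-z_f)$ in the $\Z$-norm: since $L(t)$ is strongly continuous on the scale of spaces and $z-z_f\in\Z$, one would like ``$\|L(t)\|_{\LL(\Z)}\le M\e^{-\eps t}$'', but $\Z$ is not one of the $\H^r$ for which \eqref{DECCO2} is stated, so instead decompose $\ell_1$ itself once more: the $H^3$-component decays exponentially by the same Datko/energy argument applied one notch up (using \eqref{DECCO2} at level $r=3$ together with the dissipation $\ddt\|L(t)(z-z_f)\|_{\H^3}^2+2\|\cdot\|_4^2\le 0$ obtained by testing \eqref{LINS} with $(A^2 u,A^2\eta)$), while the $\W$-part of $\ell_1^{(2)}$ is handled by controlling $\Xi[\ell_1^{(2)}]$ via the explicit representation and the kernel inequality \eqref{NEC2}; the non-decaying contributions (coming from the tail behaviour of $\eta$ near $s=0$ and $s=\infty$, which the $\Xi$-functional probes) get absorbed into the constant $\II_4(R)$, while the genuinely transient part decays like $\e^{-\eps_2 t}$ with a rate $\eps_2$ possibly smaller than $\eps_1$ because of this higher-order/memory bookkeeping, and its size is controlled by $\II_3(\varrho)$ since it depends on the $\Z$-norm of the datum.

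\emph{Main obstacle.} The serious point is \eqref{DU} (and its $\Z$-norm analogue for $\ell_1$ in \eqref{TRI}): controlling the memory variable in the $\W$-norm, i.e.\ bounding $\Xi[\ell_2^{(2)}]=\|T\ell_2^{(2)}\|_{\M^2}^2+\sup_{x\ge1}x\int_{(0,1/x)\cup(x,\infty)}\mu(s)\|\ell_2^{(2)}(s)\|_2^2\,\d s$. The term $\|T\ell_2^{(2)}\|_{\M^2}$ requires differentiating the history equation and using $\eta(0)=0$ together with \eqref{TTT} at the level of $T\eta$; the supremum term requires a delicate splitting of the $s$-integral exploiting \eqref{NEC2} (the flat-zone-tolerant kernel hypothesis), in the spirit of \cite{PZ} and \cite{Amnesia}, and this is where the generality of $\mu$ makes the memory bookkeeping genuinely technical. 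Everything else --- the choice of $z_f$, the exponential decay of $\ell_1$ in $\V$, and packaging the estimates through Lemmas~\ref{LEMMAGRNWGEN}--\ref{QWYM2} --- is routine once the regularizing bound $\|u\|_2\le\II(R)$ of Proposition~\ref{propREG} is in hand.
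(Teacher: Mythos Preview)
Your decomposition $S(t)z=z_f+L(t)(z-z_f)+W(t)$ and the treatment of \eqref{UN} via Proposition~\ref{propExpStab} are exactly what the paper does, and your identification of the key input $\|u(t)\|_2\leq\II(R)$ (hence $\|\varphi(u)\|_2\leq\II(R)$) from Proposition~\ref{propREG} is correct. Two execution points differ from the paper and are worth noting.

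First, for \eqref{DU} you propose a Lemma~\ref{QWYM}-style argument one derivative higher (testing with $A^2\pt w$, translation-bounded forcing, Lemma~\ref{LEMMAGRNWGEN}). The paper takes a simpler route: since here $g=-\varphi(u)$ is \emph{uniformly} bounded in $H^2$ (not merely translation-bounded), a direct multiplication of the $W$-system in $\H^3$ gives $\ddt\|W\|_{\H^3}^2+2\|w\|_4^2\leq\|w\|_4^2+C$, and the missing dissipation on $\|\xi\|_{\M^4}^2$ is recovered via the auxiliary functional $\Upsilon(t)=\int_0^\infty\kappa(s)\|\xi^t(s)\|_4^2\,\d s$ (as in \cite{CMP}, needed precisely because only \eqref{NEC} is assumed). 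A standard Gronwall then yields $\|W(t)\|_{\H^3}\leq C$. Your approach would work too, but the uniform $H^2$-bound on the forcing makes the heavier machinery unnecessary.

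Second, for the $\Xi$-control (your ``main obstacle'', arising both in \eqref{DU} for $\xi$ and in \eqref{TRI} for the second component of $L(t)(z-z_f)$), the paper does not redo the representation-formula and kernel-splitting analysis you outline; it invokes the ready-made Lemma~\ref{TECH1} (taken from \cite{Amnesia}), which gives $\Xi[\eta^t]\leq Q^2\Xi[\eta_0]\e^{-2\nu t}+Q^2\|u\|_{L^\infty(0,t;H^2)}^2$ for any solution of $\pt\eta=T\eta+u$. Applied with $\eta_0=0$ and $u=w$ this closes \eqref{DU}; applied with $\eta_0=\eta_0-\eta_f\in\W$ (of $\Xi$-size $\leq\varrho^2$) and $u=v$ bounded in $H^2$ by $\II(R)$, and combined with \eqref{DECCO2} at $r=3$, it gives \eqref{TRI} with $\eps_2=\min\{\eps,\nu\}$. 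So the technical memory bookkeeping you flag is real, but it has already been packaged into Lemma~\ref{TECH1}. Finally, your worry about an ``initial layer'' in $\|u\|_2$ is unnecessary: since $z\in\Ball_\V(R)$, the second estimate in Proposition~\ref{propREG} gives a uniform bound for all $t\geq 0$.
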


\begin{proof}[Proof of Theorem \ref{ATT1}]
For any given $ R, \varrho >0$ and
$$z\in\Ball_\V(R)\cap[z_f+\Ball_{\Z}(\varrho)],
$$
it is readily seen from \eqref{DU}-\eqref{TRI} that
\begin{equation}
\label{XRT} \|S(t)z-z_f\|_\Z\leq \II_3(\varrho)\e^{-\eps_2
t}+\II_2(R)+\II_4(R).
\end{equation}
We fix then $\BB$ by selecting
$$R_\star=2\II_2(R_\V)+2\II_4(R_\V),$$
with $R_\V$ as in Corollary~\ref{corREG}. In particular, defining
$$\varrho_\star = \II_3(R_\star)
+\II_2(\|\BB\|_\V)+\II_4(\|\BB\|_\V),$$ inequality~\eqref{XRT}
provides the inclusion
$$
S(t)\BB\subset z_f+\Ball_\Z(\varrho_\star),\quad\forall t\geq 0.
$$
On the other hand, by Corollary~\ref{corREG}, there is a time
$t_\e\geq 0$ (the entering time of $\BB_\V$ into itself) for which
$$S(t_\e)\BB\subset S(t_\e)\BB_\V\subset\BB_\V=\Ball_\V(R_\V).$$
In conclusion,
$$S(t_\e)\BB\in\Ball_\V(R_\V)\cap[z_f+\Ball_{\Z}(\varrho_\star)],
$$
and a further application of \eqref{XRT} for $t\geq t_\e$ leads to
$$\|S(t)z-z_f\|_\Z\leq \II_3(\varrho_\star)\e^{-\eps_2 (t-t_\e)}
\textstyle +\frac{1}{2}R_\star,\quad\forall z\in\BB.
$$
Accordingly, (i) holds true by taking a sufficiently large
$t_\star=t_\star(R_\star)\geq t_\e$. Finally, since
$\II_2(R_\V)<R_\star$, relations \eqref{UN}-\eqref{DU} immediately
entail the estimate
$$\dist_{\V}(S(t)\BB_\V,\BB)
\leq \II_1(R_\V)\e^{-\eps_1 t},
$$
establishing (ii).
\end{proof}

\subsection{Proof of Lemma \ref{LEMMASUPER}}

\label{subsub} We will make use of the following technical lemma
(see~\cite{Amnesia} for a proof).

\begin{lemma}
\label{TECH1} Given $\eta_0\in\W$ and $u\in L^\infty_{\rm
loc}(\R^+;H^2)$, let $\eta=\eta^t(s)$ be the unique solution to the
Cauchy problem in $\M^2$
$$
\begin{cases}
\pt\eta^t=T\eta^t+ u(t),\\
\eta^0=\eta_0.
\end{cases}
$$
Then, $\eta^t\in\D_2(T)$ for every $t>0$, and
$$\Xi[\eta^t]\leq Q^2\,\Xi[\eta_0]\e^{-2\nu
t}+Q^2\|u\|_{L^\infty(0,t;H^2)}^2,
$$
for some $Q\geq 1$ and some $\nu>0$, both independent of $\eta_0$
and $u$.
\end{lemma}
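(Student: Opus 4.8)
The plan is to solve the transport equation explicitly and then estimate the functional $\Xi$ piece by piece, using the exponential decay of the kernel $\mu$. Since $T\eta=-\eta'$ and the constraint $\eta^t(0)=0$ is part of $\D_2(T)$, the Cauchy problem $\pt\eta=T\eta+u$ is a linear transport equation of unit speed, whose solution is given by the representation formula \eqref{REP}: $\eta^t(s)=\int_0^s u(t-y)\,\d y$ for $s\le t$, and $\eta^t(s)=\eta_0(s-t)+\int_0^t u(t-y)\,\d y$ for $s>t$. As $\eta_0(0)=0$, the two branches agree at $s=t$, so $\eta^t$ has no jump there and its distributional derivative in $s$ equals $u(t-s)$ on $(0,t)$ and $\eta_0'(s-t)$ on $(t,\infty)$; together with $\eta^t(0)=0$ this gives $\eta^t\in\D_2(T)$ once the integrals below are seen to be finite. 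By linearity I would set $\eta^t=\zeta^t+\xi^t$, where $\zeta^t$ is the solution forced by $u$ with $\eta_0=0$, and $\xi^t$ is the pure right-translation of the datum (the case $u=0$, so that $\xi^t(s)=0$ for $s\le t$ and $\xi^t(s)=\eta_0(s-t)$ for $s>t$). Since $\|T\,\cdot\,\|_{\M^2}^2$ and each weighted integral appearing in $\Xi$ are squared seminorms, $\Xi[\eta^t]\le 2\Xi[\zeta^t]+2\Xi[\xi^t]$, and it suffices to treat the two parts separately.

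Write $\Phi[\eta,x]=x\int_{(0,1/x)\cup(x,\infty)}\mu(s)\|\eta(s)\|_2^2\,\d s$ for the bracketed quantity in $\Xi$. A repeatedly used observation is that, at $x=1$, the set $(0,1)\cup(1,\infty)$ is the whole half-line up to a null set, so $\|\eta_0\|_{\M^2}^2=\Phi[\eta_0,1]\le\Xi[\eta_0]$; thus $\Xi$ already controls the full $\M^2$-norm of the datum. For the translated part, the substitution $\sigma=s-t$ gives $\|T\xi^t\|_{\M^2}^2=\int_0^\infty\mu(\sigma+t)\|\eta_0'(\sigma)\|_2^2\,\d\sigma$, and \eqref{NEC2} in the form $\mu(\sigma+t)\le C\e^{-\delta t}\mu(\sigma)$ bounds this by $C\e^{-\delta t}\|T\eta_0\|_{\M^2}^2\le C\e^{-\delta t}\Xi[\eta_0]$. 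The near-$0$ and tail contributions to $\Phi[\xi^t,x]$ are handled by the same substitution and \eqref{NEC2}; for the tail I would then use the lower bound $\Xi[\eta_0]\ge\Phi[\eta_0,x-t]\ge(x-t)\int_{x-t}^\infty\mu\|\eta_0\|_2^2$ when $x-t\ge1$ (and $\|\eta_0\|_{\M^2}^2\le\Xi[\eta_0]$ otherwise), which produces a harmless factor $\tfrac{x}{x-t}\le1+t$. Collecting the three contributions yields $\Xi[\xi^t]\le C(1+t)\e^{-\delta t}\Xi[\eta_0]$, which I would absorb into $Q^2\e^{-2\nu t}\Xi[\eta_0]$ for any $\nu<\delta/2$.

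For the forced part, one has $\|\zeta^t(s)\|_2\le s\,\|u\|_{L^\infty(0,t;H^2)}$ for $s\le t$ and $\|\zeta^t(s)\|_2\le t\,\|u\|_{L^\infty(0,t;H^2)}$ for $s>t$ (where $\zeta^t$ is constant in $s$), and the target is a $t$-uniform bound by $\|u\|_{L^\infty(0,t;H^2)}^2$ with no exponential factor. The derivative term is immediate, $\|T\zeta^t\|_{\M^2}^2=\int_0^t\mu\|u(t-s)\|_2^2\,\d s\le\kappa_0\|u\|_{L^\infty(0,t;H^2)}^2$. For the near-$0$ term, on $(0,1/x)$ one has $s^2\le s/x$, so $\int_0^{1/x}\mu s^2\,\d s\le\tfrac1x\int_0^\infty s\mu\,\d s=\tfrac1x$ using $\int_0^\infty s\mu\,\d s=1$, whence $x\int_0^{1/x}\mu\|\zeta^t\|_2^2\,\d s\le\|u\|_{L^\infty(0,t;H^2)}^2$. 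The tail term is the delicate one: I would split it at $s=t$ into the two regimes $x\ge t$ and $x<t$, and invoke the decay $\int_s^\infty\mu(\sigma)\,\d\sigma=\kappa(s)\le\kappa_0\e^{-s/\Theta}$ from \eqref{NEC}. The apparently growing prefactors are defeated by this decay: the constant-range contribution is $\le x t^2\|u\|_{L^\infty(0,t;H^2)}^2\,\kappa(\max(x,t))\le\kappa_0\,\|u\|_{L^\infty(0,t;H^2)}^2\sup_{r\ge1}r^3\e^{-r/\Theta}$, since $xt^2\le(\max(x,t))^3$; the remaining piece (present only when $x<t$) is $\le x\,\|u\|_{L^\infty(0,t;H^2)}^2\int_x^\infty\mu s^2\,\d s$, which is bounded uniformly in $x\ge1$ by an integration by parts using $\kappa$. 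This gives $\Xi[\zeta^t]\le C\|u\|_{L^\infty(0,t;H^2)}^2$; adding the two estimates and renaming constants produces the claim.

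I expect the main obstacle to be precisely this tail estimate for the forced part, where the bookkeeping over the regimes of $x$ relative to $t$ and $1$ must be arranged so that the exponential decay of the kernel (through $\kappa(s)\le\kappa_0\e^{-s/\Theta}$) cancels the polynomial growth $x\min(s,t)^2$ and leaves a constant independent of $t$. The secondary subtlety is the tail of the translated part, which closes only thanks to the lower bound $\Xi[\eta_0]\ge(x-t)\int_{x-t}^\infty\mu\|\eta_0\|_2^2$ and the identity $\|\eta_0\|_{\M^2}^2\le\Xi[\eta_0]$ that makes $\Xi$ control the $\M^2$-norm.
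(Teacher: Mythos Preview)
The paper does not actually prove this lemma; it defers to \cite{Amnesia}. Your argument is correct and is essentially the standard one carried out there: use the explicit representation formula~\eqref{REP}, split $\eta^t$ into the right-translated datum $\xi^t$ and the Duhamel (forced) part $\zeta^t$, and estimate each contribution to $\Xi$ using the exponential decay of the kernel via \eqref{NEC2} and $\kappa(s)\le\kappa_0\e^{-s/\Theta}$. The observation that $\|\eta_0\|_{\M^2}^2=\Phi[\eta_0,1]\le\Xi[\eta_0]$ is exactly what makes the functional $\Xi$ the right quantity to propagate, and your use of it to close the tail of the translated part in the regime $x-t<1$ is the correct move. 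The two points you flag as delicate---the $(1+t)$ prefactor in $\Xi[\xi^t]$, absorbed by choosing $2\nu<\delta$, and the competition between the polynomial $x\min(s,t)^2$ and the kernel decay in the tail of $\zeta^t$---are precisely where the work lies, and your treatment of both (in particular the bound $xt^2\le(\max(x,t))^3$ followed by $\sup_{r\ge1}r^3\e^{-r/\Theta}<\infty$) is sound.
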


In the sequel, $C>0$ will denote a generic constant, which may
depend (increasingly) only on $R$. Given $z\in\Ball_\V(R)$, we put
$$\ell_1(t;z)=L(t)(z-z_f),\quad\ell_2(t;z)=W(t),$$
where, by comparison, the function $W(t)=(w(t),\xi^t)$ solves the
problem
\begin{equation}
\label{SYS2}
\begin{cases}
\displaystyle
\pt w+A  w + \int_0^\infty \mu(s)A\xi(s)\d s + \varphi(u)=0,\\
\pt \xi=T\xi+ w, \\
\noalign{\vskip1.5mm} W(0)=0.
\end{cases}
\end{equation}
In light of Proposition~\ref{propExpStab}, we get at once
\eqref{UN}. Indeed,
$$
\|L(t)(z-z_f)\|_{\V}\leq M_1\|z-z_f\|_\V\,\e^{-\eps_1 t}\leq
C\e^{-\eps_1 t}.
$$
If $z\in z_f+\Ball_{\Z}(\varrho)$, the decay property \eqref{DECCO2}
provides the estimate
$$
\|L(t)(z-z_f)\|_{\H^3}\leq M\|z-z_f\|_{\H^3}\e^{-\eps t} \leq
M\varrho\e^{-\eps t}.
$$
The second component of $L(t)(z-z_f)=(v(t),\psi^t)$ fulfills the
problem
$$
\begin{cases}
\pt\psi^t=T\psi^t+ v,\\
\psi^0=\eta_0-\eta_f,
\end{cases}
$$
and the $\V$-estimate above ensures the uniform bound
$$\|v(t)\|_2\leq C.$$
Therefore, by Lemma~\ref{TECH1},
$$\Xi[\psi^t]\leq Q^2\,\Xi[\eta_0-\eta_f]\e^{-2\nu t}+C
\leq Q^2\varrho^2\e^{-2\nu t}+C.
$$
Putting $\eps_2=\min\{\eps,\nu\}$, we obtain
$$
\|L(t)(z-z_f)\|_{\Z}^2=\|L(t)(z-z_f)\|_{\H^3}^2+\Xi[\psi^t] \leq
(M^2+Q^2)\varrho\e^{-2\eps_2 t}.
$$
This proves~\eqref{TRI}. We now turn to system \eqref{SYS2}. Thanks
to Proposition~\ref{propREG},
$$\|u(t)\|_2\leq C.$$
By \eqref{critical}, it is then standard matter to verify that
$$\|\varphi(u(t))\|_2\leq C.$$
Multiplying \eqref{SYS2} by $W$ in $\H^3$, and using \eqref{TTT}, we
arrive at
$$\frac{\d}{\d t}\|W\|_{\H^3}^2+2\|w\|^2_4\leq
\|\varphi(u)\|_2\|w\|_4\leq \|w\|_4^2+C.
$$
In order to reconstruct the energy, following~\cite{CMP}, we
introduce the functional
$$\Upsilon(t) = \int_0^\infty k(s) \|\xi^t(s)\|_4^2\,\d s,$$
which, in light of \eqref{NEC}, satisfies the bound
$$\Upsilon\leq\Theta\|\xi\|_{\M^4}^2,$$
and the differential inequality
\begin{align*}
\frac{\d}{\d t}\Upsilon &=-\|\xi\|_{\M^4}^2+2\int_0^\infty
k(s)\l\xi(s),w\r_4\,\d s\\
&\leq-\|\xi\|_{\M^4}^2+2\Theta\|w\|_4\int_0^\infty
\mu(s)\|\xi(s)\|_4\,\d s
\textstyle\leq-\frac12\|\xi\|_{\M^4}^2+\kappa_0\Theta^2\|w\|_4^2.
\end{align*}
Defining then
$$\Psi(t)=\Theta_0\|W(t)\|_{\H^3}^2+\Upsilon(t),$$
for $\Theta_0>\max\{\Theta,\kappa_0\Theta^2\}$ (so that, in
particular, $\Psi$ and $\|W\|_{\H^3}^2$ control each other) the
differential inequality
$$\frac{\d}{\d t}\Psi+\varpi\Psi\leq C,
$$
holds for some $\varpi=\varpi(\Theta_0,\Theta,\lambda_1)>0.$ Hence,
the Gronwall lemma gives the uniform bound
$$\|W(t)\|_{\H^3}\leq C.$$
Finally, applying Lemma~\ref{TECH1} to the second equation
of~\eqref{SYS2}, we get
$$\Xi[\xi^t]\leq C.$$
Summarizing,
$$\|W(t)\|_\Z\leq C.$$
This establishes \eqref{DU} and completes the proof of the lemma.
\qed

\section{Exponential Attractors}

The next step is to demonstrate the existence of a regular set $\EE$
which exponentially attracts $\BB_\V$.

\begin{theorem}
\label{EXPMAINTHM} There exists a compact set $\EE\subset\V$ with
$\dim_\V(\EE)<\infty$, and positively invariant for $S(t)$, such
that
$$
\dist_{\V}(S(t) \BB_\V,\EE) \leq C_\star \e^{-\omega_1 t},
$$
for some $C_\star>0$ and some $\omega_1>0$.
\end{theorem}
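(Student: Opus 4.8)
The plan is to obtain $\EE$ by the standard abstract machinery for constructing exponential attractors (in the spirit of the Efendiev--Miranville--Zelik / Eden--Foias--Nicolaenko--Temam approach), applied to the discrete dynamical system generated by the time-$t_\star$ map on the positively invariant set $\BB$ provided by Theorem~\ref{ATT1}, and then extended to continuous time. First I would set $\S=S(t_\star)$ and work on the closed bounded set $\BB=z_f+\Ball_\Z(R_\star)$, which by Theorem~\ref{ATT1}(i) satisfies $\S\BB\subset\BB$; recall that closed balls of $\Z$ are compact in $\V$, so $\BB$ is a compact subset of $\V$ and $\S:\BB\to\BB$ is continuous by Proposition~\ref{propSTR1}. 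The key ingredient is a \emph{smoothing property}: for $z_1,z_2\in\BB$ one writes, exactly as in Lemma~\ref{LEMMASUPER}, $\S z_i=z_f+\ell_1(t_\star;z_i)+\ell_2(t_\star;z_i)$, or better one decomposes the \emph{difference} $S(t)z_1-S(t)z_2$ into a part decaying exponentially in $\V$ (coming from the linear semigroup $L(t)$, using Proposition~\ref{propExpStab}) plus a part bounded in the more regular space $\Z$ (coming from the $\varphi$-forced component, estimated via Proposition~\ref{propREG}, \eqref{critical}, the Agmon inequality, and Lemma~\ref{TECH1}, in the same way the a~priori bounds in Lemma~\ref{LEMMASUPER} were derived, but now for differences and hence with the $\II(R)$ factor replaced by a constant times $\|z_1-z_2\|_\V$). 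Concretely, I expect an estimate of the form
$$
\|\S z_1-\S z_2\|_{\V}\leq \tfrac12\|z_1-z_2\|_\V + K\,\dist\text{-type remainder in }\Z,
$$
more precisely a decomposition $\S z_1-\S z_2 = D_1(z_1,z_2)+D_2(z_1,z_2)$ with $\|D_1\|_\V\leq \tfrac18\|z_1-z_2\|_\V$ and $\|D_2\|_\Z\leq K\|z_1-z_2\|_\V$; since the unit ball of $\Z$ is relatively compact in $\V$, this is precisely the hypothesis of the abstract exponential-attractor theorem.

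Granting the smoothing property, the abstract theorem yields a compact set $\EE_d\subset\BB$ with $\dim_\V(\EE_d)<\infty$, positively invariant for $\S$ (i.e.\ $\S\EE_d\subset\EE_d$), and exponentially attracting $\BB$ for the discrete semigroup: $\dist_\V(\S^n\BB,\EE_d)\leq C q^n$ for some $q\in(0,1)$. To pass to continuous time I would set
$$
\EE=\bigcup_{t\in[0,t_\star]}S(t)\EE_d,
$$
and check the three required properties. Positive invariance of $\EE$ under $S(t)$ for all $t\geq0$ follows from the semigroup property together with $\S\EE_d\subset\EE_d$: any $S(\tau)$ with $\tau\geq0$ maps $S(t)\EE_d$ into $S(t+\tau)\EE_d=S(r)S(kt_\star)\EE_d\subset S(r)\EE_d\subset\EE$ where $\tau+t=kt_\star+r$, $r\in[0,t_\star)$. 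Finite fractal dimension: $\dim_\V(\EE)\leq\dim_\V(\EE_d)+1$, because the map $(t,w)\mapsto S(t)w$ from $[0,t_\star]\times\EE_d$ to $\V$ is Lipschitz — here I use the joint continuity/Lipschitz dependence, namely Proposition~\ref{propSTR1} in $w$ and the fact that $t\mapsto S(t)w$ is Lipschitz on bounded sets of $\V$ (a consequence of the equation together with the $\V$-bound $\|S(t)w\|_\V\leq R_\V$ and $\|u\|_2\leq C$, which controls $\|\pt u\|$ and $\|\pt\eta\|_{\M^2}$; $\pt\eta=T\eta+u$ is handled using $\eta\in\D_2(T)$). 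Compactness of $\EE$ then follows since it is the continuous image of the compact set $[0,t_\star]\times\EE_d$. Exponential attraction: for $t\geq0$ write $t=nt_\star+r$ with $r\in[0,t_\star)$; using that $\BB_\V$ is absorbed into $\BB$ after a fixed time (Corollary~\ref{corREG} and Theorem~\ref{ATT1}(i) give $S(\sigma)\BB_\V\subset\BB$ for $\sigma\geq$ some $\sigma_0$) and the uniform continuity in time of $S(r)$ on the bounded set $\BB_\V\cup\BB$, one gets
$$
\dist_\V(S(t)\BB_\V,\EE)\leq \dist_\V\big(S(r)\,S(nt_\star)\BB_\V,\;S(r)\EE_d\big)\leq L\,\dist_\V(\S^{n}\BB_\V,\EE_d)\leq C_\star\e^{-\omega_1 t}
$$
for $t$ large (and trivially adjusting $C_\star$ for small $t$), where $\omega_1$ comes from $q=\e^{-\omega_1 t_\star}$ and $L$ is the $\V$-Lipschitz constant of $S(r)$ on the relevant bounded set.

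The main obstacle I expect is establishing the smoothing (difference) estimate in the right spaces — i.e.\ verifying that $S(t_\star)z_1-S(t_\star)z_2$ genuinely splits into a $\V$-contraction plus a piece bounded in $\Z$ by $C\|z_1-z_2\|_\V$. The difficulty is exactly the one flagged in the introduction: for $p=3$ the nonlinearity $\varphi$ is critical, so controlling $\varphi(u_1)-\varphi(u_2)$ in the regularity needed for the $\Z$-part is delicate and relies crucially on the instantaneous regularization $\|u_i(t)\|_2\leq C$ from Proposition~\ref{propREG} (which makes $\varphi$ effectively subcritical) together with the Agmon inequality, and on the careful memory-term estimates encapsulated in Lemma~\ref{TECH1} and the $\Upsilon$-functional argument used in the proof of Lemma~\ref{LEMMASUPER}. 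Once that difference estimate is in place, everything else is the routine application of the abstract theorem plus the standard discrete-to-continuous passage sketched above; it may also be cleaner to first construct $\EE_d$ exponentially attracting $\BB$ only, and then enlarge by $\bigcup_{t\in[0,t_\star]}S(t)(\cdot)$ and invoke Theorem~\ref{ATT1}(ii) (exponential attraction of $\BB_\V$ by $\BB$ in $\V$) together with transitivity of exponential attraction to upgrade to attraction of $\BB_\V$, which is what the statement requires.
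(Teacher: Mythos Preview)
Your proposal is correct and follows essentially the same route as the paper: apply the abstract Efendiev--Miranville--Zelik machinery to the discrete map $S(t_\star)$ on the positively invariant set $\BB$ from Theorem~\ref{ATT1}, verifying the smoothing decomposition for differences (linear part $L(t)\bar z$ decaying in $\V$ via Proposition~\ref{propExpStab}, nonlinear remainder bounded in $\Z$ via the $H^2$-control of $u_i$, the estimate on $\varphi(u_1)-\varphi(u_2)$, and Lemma~\ref{TECH1}), then pass to continuous time by taking a union over a time interval, and finally invoke transitivity of exponential attraction together with Theorem~\ref{ATT1}(ii) to upgrade attraction of $\BB$ to attraction of $\BB_\V$. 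The paper does exactly this, merely placing the transitivity reduction at the beginning rather than the end, and using $\EE=\bigcup_{t\in[t_\star,2t_\star]}S(t)\EE_{\rm d}$ rather than $[0,t_\star]$.

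One point where your sketch is imprecise: the time-Lipschitz property in $\V$ requires control of $\|\pt u\|_2$ (not merely $\|\pt u\|$), and reading $\pt u=-Au-\int\mu A\eta-\varphi(u)+f$ directly off the equation only yields this if $u\in H^4$, which is stronger than what $\BB$ provides. The paper handles this not by a static estimate but by differentiating the system in time and applying Lemma~\ref{QWYM2} to the resulting problem for $(\tilde u,\tilde\eta)=\pt S(t)z$ with forcing $g=-\varphi'(u)\tilde u$; the required bound on the initial data $\tilde z$ in $\V$ uses the full $\Z$-regularity of $\BB$ (in particular $\eta_0\in\M^4\cap\D_2(T)$). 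You should replace your informal argument for time-Lipschitz with this one.
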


We preliminary observe that, thanks to the exponential decay
property of Theorem~\ref{ATT1}
$$\dist_{\V}(S(t)\BB_\V,\BB)
\leq C_1 \e^{-\eps_1 t},
$$
and the continuous dependence estimate provided by
Proposition~\ref{propSTR1}, the transitivity of the exponential
attraction, devised in~\cite{FGMZ}, applies. Hence, it suffices to
prove the existence of a set $\EE$ complying with the statement of
the theorem, but satisfying only the weaker exponential decay
estimate
\begin{equation}
\label{EXPtotal} \dist_{\V}(S(t) \BB,\EE) \leq C_0 \e^{-\omega t},
\end{equation}
for some $C_0>0$ and some $\omega>0$. Thus, in light of the abstract
result from~\cite{EMZ} on the existence of exponential attractors
for discrete semigroups in Banach spaces, and thereafter
constructing the attractor for the continuous case in a standard
way, Theorem~\ref{EXPMAINTHM} applies provided that we show the
following facts:
\begin{itemize}
\item[(i)] There exist positive functions $\gamma(\cdot)$ and
$\Gamma(\cdot)$, with $\gamma$ vanishing at infinity, such that the
decomposition
$$S(t)z_1-S(t)z_2=\ell_1(t;z_1,z_2)+\ell_2(t;z_1,z_2),
$$
holds for every $z_1,z_2\in\BB$, where
\begin{align*}
\|\ell_1(t;z_1,z_2)\|_{\V}&\leq \gamma(t)\|z_1-z_2\|_\V,\\
\|\ell_2(t;z_1,z_2)\|_{\Z}&\leq \Gamma(t)\|z_1-z_2\|_\V.
\end{align*}
\item[(ii)] There exists $K\geq 0$ such that
$$\sup_{z\in \BB}\|S(t)z-S(\tau)z\|_\V\leq K|t-\tau|,
\quad\forall t,\tau\in[t_\star,2t_\star].
$$
\end{itemize}
Indeed, recalling that $\BB$ is closed in $\V$, by means of (i) we
obtain the existence of an exponential attractor $\EE_{\rm
d}\subset\BB$ for the discrete semigroup $S_n:=S(n
t_\star):\BB\to\BB$. Then, we define
$$\EE=\bigcup_{t\in
[t_\star,2 t_\star]}S(t)\EE_{\rm d}.$$ Due to (ii) and
Proposition~\ref{propSTR1}, the map
$$(t,z)\mapsto S(t)z:[t_\star,2t_\star]\times\BB\to\BB,$$
is Lipschitz continuous with respect to the
$(\R\times\V,\V)$-topology. This guarantees that $\EE$ shares the
same features of $\EE_{\rm d}$ (e.g.\ positive invariance and finite
fractal dimension).

\begin{proof}[Proof of {\rm (i)}]
Till the end of the section, the generic constant $C>0$ depends only
on $\BB$. Setting $S(t)z_\imath=(u_\imath(t),\eta_\imath^t)$ and
$\bar z=z_1-z_2$, we write
$$S(t)z_1-S(t)z_2=L(t)\bar z+W(t),
$$
where $W(t)=(\bar w(t),\bar\xi^t)$ solves the problem
\begin{equation}
\label{SYS2bis}
\begin{cases}
\displaystyle \pt \bar w+A \bar w + \int_0^\infty
\mu(s)A\bar\xi(s)\d s
=\varphi(u_2)-\varphi(u_1),\\
\pt\bar\xi=T\bar\xi+\bar w, \\
\noalign{\vskip1.5mm} W(0)=0.
\end{cases}
\end{equation}
By Proposition~\ref{propExpStab},
$$
\|L(t)\bar z\|_{\V}\leq M_1\|\bar z\|_\V\,\e^{-\eps_1 t}.
$$
Taking advantage of \eqref{critical} and Proposition~\ref{propSTR1},
$$\|\varphi(u_2(t))-\varphi(u_1(t))\|_2\leq C \|u_2(t)-u_1(t)\|_2
\leq C \|\bar z\|_\V\,\e^{C t}.
$$
Hence, multiplying \eqref{SYS2bis} by $W$ in $\H^3$, and using
\eqref{TTT}, we obtain
$$\frac{\d}{\d t}\|W\|_{\H^3}^2\leq C \|\bar z\|_\V^2\,\e^{C t},
$$
and an integration in time readily gives
$$\|W(t)\|_{\H^3}^2\leq C \|\bar z\|_\V^2\,\e^{C t}.
$$
Accordingly, from Lemma~\ref{TECH1} applied to the second equation
of~\eqref{SYS2bis},
$$\Xi[\bar\xi^t]\leq C \|\bar z\|_\V^2\,\e^{C t}.
$$
Consequently, we learn that
$$\|W(t)\|_{\Z}\leq C \|\bar z\|_\V\,\e^{C t}.
$$
Therefore, (i) holds with the choice $\ell_1(t;z_1,z_2)=L(t)\bar z$
and $\ell_2(t;z_1,z_2)=W(t)$.
\end{proof}

\begin{proof}[Proof of {\rm (ii)}]
We will show that
$$\sup_{t\in[t_\star,2t_\star]}\sup_{z\in \BB}\|\pt S(t)z\|_\V\leq C,
$$
which clearly implies (ii). For $z=(u_0,\eta_0)\in\BB$, the function
$(\tilde u(t),\tilde\eta^t)=\pt S(t) z$ fulfills the Cauchy problem
$$
\begin{cases}
\displaystyle \pt \tilde u+A \tilde u + \int_0^\infty
\mu(s)A\tilde\eta(s)\,\d s
+\varphi'(u)\tilde u=0,\\
\pt \tilde\eta=T\tilde\eta+\tilde u, \\
\noalign{\vskip1.5mm} (\tilde u(0),\tilde\eta^0)=\tilde z,
\end{cases}
$$
where
$$\textstyle
\tilde z=(-Au_0-\int_0^\infty \mu(s)A\eta_0(s)\,\d s
-\varphi(u_0)+f,T\eta_0+ u_0).$$ Observe that
$$\|\varphi'(u)\tilde u\|_1\leq C\|\tilde u\|_2.$$
Thus, applying Lemma~\ref{QWYM2} with $f=0$ and
$g=-\varphi'(u)\tilde u$, and noting that $\|\tilde z\|_{\V}\leq C$,
the claim follows.
\end{proof}

\section{Proofs of the Main Results}

We have now all the ingredients to carry out the proofs of the
results stated in Section~\ref{SM}.

\begin{proof}[Proofs of Theorem \ref{MAIN} and Corollary
\ref{MAINcorcor}] Let $\B\subset\Ball_{\H^1}(R)$, for some $R>0$.
According to Corollary~\ref{corREG}, there is a positive time
$t_\V=t_\V(R)$ such that
$$S(t)\B\subset\BB_\V,\quad \forall t\geq t_\V.$$
Therefore, by Theorem~\ref{EXPMAINTHM},
$$
\dist_{\V}(S(t) \B,\EE) \leq \II(R)\e^{-\omega_1 t},\quad \forall
t\geq t_\V.
$$
On the other hand, by virtue of Proposition~\ref{propREG},
$$
\dist_{\V}(S(t) \B,\EE) \leq \frac{\II(R)}{\sqrt{t}},\quad \forall
t\in(0,t_\V).
$$
Collecting the two inequalities we obtain
$$\dist_{\V}(S(t) \B,\EE)
\leq\II(R) \frac{\e^{-\omega_1  t}}{\sqrt{t}},\quad\forall t>0.
$$
The remaining properties of $\EE$ are ensured by
Theorem~\ref{EXPMAINTHM}. With respect to the Hausdorff semidistance
in $\H^1$, we have
$$\dist_{\H^1}(S(t) \B,\EE)\leq
\big(\lambda_1^{-1/2}+1\big)\dist_{\V}(S(t) \B,\EE) \leq\II(R)
\e^{-\omega_1 t},\quad\forall t\geq 1,
$$
and, due to \eqref{palmiro},
$$\dist_{\H^1}(S(t) \B,\EE)
\leq\II(R),\quad\forall t< 1.
$$
Hence, Corollary~\ref{MAINcorcor} follows.
\end{proof}

Theorem~\ref{THMATT} is a direct consequence of Theorem~\ref{MAIN}
(cf.\ \cite{BV,TEM}).

\begin{proof}[Proof of Proposition \ref{CHAR}]
Let $Z(t)=(u(t),\eta^t)$ be a solution lying on $\AA$. Assume first
$t>0$. Fixed an arbitrary $\tau>0$, denote $z_\tau=S(-\tau)Z(0)$ and
set
$$(u_\tau(t),\eta^t_\tau)=S(t)z_\tau.$$
Observing that
$$(u_\tau(t+\tau),\eta^{t+\tau}_\tau)=(u(t),\eta^t),$$
the representation formula~\eqref{REP} for $\eta^{t+\tau}$ gives
$$
\eta^t(s)=\eta^{t+\tau}_\tau(s)=\int_{0}^s u_\tau(t+\tau-y)\d y
=\int_{0}^s u(t-y)\d y,
$$
whenever $0<s\leq t+\tau$. From the arbitrariness of $\tau>0$, we
conclude that \eqref{AUXV} is valid for all $t> 0$. If $t\leq 0$,
the argument is similar, and left to the reader.
\end{proof}

\begin{proof}[Proof of Corollary \ref{CORMAIN}]
Let $\B\subset\Ball_{\H^0}(R)$, for some $R>0$. From \eqref{LOWATT},
$$\dist_{\H^0}(S(t)\B,\BB_1)\leq \II(R)\e^{-\eps_0 t},
$$
whereas Corollary~\ref{MAINcorcor} implies, in particular, that
$$\dist_{\H^0}(S(t) \BB_1,\EE)
\leq\II(R_1)\e^{-\omega_1 t}.
$$
Besides, exploiting \eqref{DERIVATA}, the continuous dependence
estimate
$$
\|S(t)z_1-S(t)z_2\|_{\H^0} \leq \e^{c t}\|z_1-z_2\|_{\H^0}
$$
is easily seen to hold for some $c>0$ and every $z_1,z_2\in\H^0$.
Once again, we take advantage of the transitivity of the exponential
attraction~\cite{FGMZ}, and we obtain the required exponential
attraction property.
\end{proof}

Similarly to the case of Theorem~\ref{THMATT},
Corollary~\ref{CORATT} is a byproduct of Corollary \ref{CORMAIN} and
of the $\V$-regularity of the (exponentially) attracting set.

\medskip

\subsection*{Acknowledgments}

The authors are grateful to Professor Roger Temam for the unique
environment he provided to perform this work, at the Institute of
Scientific Computing and Applied Mathematics, Indiana University.
MDC greatly acknowledges the Mathematics Department of Indiana
University for hospitality and support.



\end{document}